\newcommand{\be}{\begin{equation}}
\newcommand{\ee}{\end{equation}}
\newcommand{\bigslant}[2]{{\raisebox{.1em}{$#1$}\left/\raisebox{-.1em}{$#2$}\right.}}
\numberwithin{equation}{section}
\numberwithin{figure}{section}
\newtheorem{theorem}{Theorem}[section]
\newtheorem{proposition}[theorem]{Proposition}
\newtheorem{remark}[theorem]{Remark}
\newtheorem{lemma}[theorem]{Lemma}
\newtheorem{corollary}[theorem]{Corollary}
\newtheorem{definition}[theorem]{Definition}
\begin{document}

\title[Regularized CH equation]{Periodic Waves for the Regularized Camassa-Holm Equation: Existence and Spectral Stability}

\author{F\'{a}bio Natali}
\address[F. Natali]{Departamento de Matem\'{a}tica - Universidade Estadual de Maring\'{a}, Avenida Colombo 5790, CEP 87020-900, Maring\'{a}, PR, Brazil}
\email{fmanatali@uem.br}

\subjclass[2000]{76B15, 37K45, 35Q53.}

\keywords{regularized Camassa-Holm equation, periodic traveling waves, spectral stability, orbital stability}

\begin{abstract}In this paper, we investigate the existence and spectral stability of periodic traveling wave solutions for the regularized Camassa-Holm equation. To establish the existence of periodic waves, we employ tools from bifurcation theory to construct solutions with the zero-mean property. We also prove that such waves may not exist for the well-known Camassa-Holm equation. Regarding spectral stability, we analyze the difference between the number of negative eigenvalues of the second variation of the Lyapunov functional at the wave, restricted to the space of zero-mean periodic functions, and the number of negative eigenvalues of the matrix formed from the tangent space associated with the low-order conserved quantities of the evolution model. Finally, we address the problem of orbital stability as a consequence of the spectral stability.
	
\end{abstract}

\date{\today}
\maketitle

\section{Introduction}

Consider the regularized Camassa-Holm (rCH) equation
\begin{equation}
\label{CH}
u_t+\omega u_x-u_{xxt}+3uu_x=2u_xu_{xx}+uu_{xxx},
\end{equation}
where $u:\mathbb{R}\times\mathbb{R}\rightarrow \mathbb{R}$ is a real-valued function and $\omega$ is a non-negative parameter and it is related to the critical shallow water wave speed. The model in \eqref{CH} can be seen as an abstract bi-Hamiltonian equation with infinitely many conservation laws (see \cite{CH} and \cite{fuch}) and can be viewed as a generalization of the well-known Camassa-Holm (CH) equation, in the sense that setting $\omega = 0$ in \eqref{CH}, we can recover the classical CH equation. In addition, the presence of the drift term $\omega u_x$ in equation \eqref{CH} introduces additional effects concerning the existence of smooth traveling wave solutions and it is also a parameter related to the critical shallow water speed (see \cite{Len3}). To explain better our purpose, let us construct some bridges between the classical CH and our model in $(\ref{CH})$. First, we need to set our problem:

In our paper, we consider equation (\ref{CH}) defined on the periodic domain $\mathbb{T} = [0,2\pi]$. In order to simplify the notation, we write 
$H^s_{\rm per}$ instead of $H^s_{\rm per}(\mathbb{T})$.
It is well known that the rCH equation (\ref{CH}) 
conserves formally the mass, momentum, and energy (see \cite{kalisch}) given by
\begin{equation}
\label{Mu}
M(u)=\int_0^{2\pi} u dx,
\end{equation}
\begin{equation}
\label{Eu}
E(u)=\frac{1}{2} \int_0^{2\pi} (u_x^2+u^2) dx,
\end{equation}
and
\begin{equation}
\label{Fu}
F(u)=\frac{1}{2} \int_0^{2\pi} (u^3+uu_x^2+\omega u^2) dx.
\end{equation}

\indent Some qualitative aspects have been established for the CH equation, that is, for $\omega = 0$ in $(\ref{CH})$ in the periodic context, and some of them can be easily adapted to show similar results for the rCH equation. Regarding the CH equation and its local well-posedness in periodic Sobolev spaces, the proofs are based on tools from semigroup theory and fixed-point arguments, and were established in \cite{CE-1998}, \cite{CE}, \cite{CE2000}, \cite{D}, \cite{hakka}, and \cite{HM2002}. In all these cases, it is possible to adapt the arguments to obtain similar results for the rCH equation. Sufficient conditions for the existence of smooth, peaked, and cusped periodic traveling waves associated with the full equation $(\ref{CH})$ were established in \cite{Len3}. Regarding the case $\omega = 0$ and the problem $(\ref{CH})$ posed in the whole real line, orbital stability results for peaked solitary waves in $H^1(\mathbb{R})$ were obtained in \cite{CM} and \cite{CS}. However, in the recent work \cite{NP}, we showed that perturbations to the peaked solitary waves actually grow in $W^{1,\infty}(\mathbb{R})$. Still in the case $\omega = 0$ in $(\ref{CH})$, but in the periodic setting, results on the orbital stability of peaked periodic waves in $H^1_{\rm per}$ were established in \cite{Len1} and \cite{Len2}. The orbital stability of the smooth periodic traveling waves in $H^1_{\rm per}$ was obtained in \cite{Len4} with the inverse scattering transform for initial data $u_0$ in $H^3_{\rm per}$ such that $m_0 = u_0 - u_0''$ is strictly positive. The spectral and orbital stability of smooth periodic waves for the classical CH equation were determined in \cite{GMNP}. To this end, we employed the analytic framework developed for the stability analysis of periodic waves in other nonlinear evolution equations of KdV type.

In \cite{kalisch} the author considered the model
 \begin{equation}
	\label{CHK}
	u_t+\omega u_x-u_{txx}+3uu_x=\gamma(2u_xu_{xx}+uu_{xxx}),
\end{equation}
posed over the unbounded domain $\mathbb{R}$ and proved the existence of solitary waves when $\omega\neq0$. In addition, if $\gamma<1$, the solitary wave is orbitally stable, and if $\gamma>1$, there exist both orbitally stable and unstable smooth solitary waves. To demonstrate this, the author employed the abstract approach in \cite{GrillakisShatahStraussI}. It is important to mention that the approach in \cite{GrillakisShatahStraussI} cannot be used to show orbital instability, only orbital stability, for the equation $(\ref{CH})$. The reason is that the Hamiltonian structure associated with equation $(\ref{CHK})$, given by $u_t = JG'(u)$, where $J = -(1 - \partial_x^2)^{-1} \partial_x$ and $G(u) = \frac{1}{2} \int_{\mathbb{R}} (u^3 + \gamma u u_x^2 + \omega u^2)dx$, is not suitable for applying the instability theorem in \cite{GrillakisShatahStraussI}, since $J$ is not an invertible operator with a bounded inverse in $H^1(\mathbb{R})$.

Let us describe our results. We seek for traveling waves of the form $u(x,t) = \phi(x-ct)$ with speed $c$ that
satisfy the third-order differential equation 
\begin{equation}
\label{third-order}
-c\phi'+c\phi'''+\omega\phi'+3\phi\phi'=2\phi'\phi''+\phi\phi'''
\end{equation}
After integration of (\ref{third-order}), we obtain the second-order differential equation
\begin{equation}\label{CHode}
-(c-\phi)\phi''+(c-\omega)\phi-\frac{3}{2}\phi^2+\frac{1}{2} \phi'^2 +A=0,
\end{equation}
where $A$ is the constant of integration. We consider smooth $2\pi$-periodic traveling wave solutions of \eqref{CH} with the zero-mean property 
which means that we are looking for solutions 
$\phi \in H^{\infty}_{\rm per}$ of the equation (\ref{CHode}) satisfying $\int_0^{2\pi}\phi dx=0$. Thus, the constant $A$ needs to satisfy
\begin{equation}\label{constA}
	A=\frac{1}{4\pi}\int_0^{2\pi}\phi'^2dx+\frac{3}{4\pi}\int_0^{2\pi}\phi^2dx.
	\end{equation}
	\indent To the best of our knowledge, since one of the physical motivations for equation $(\ref{CH})$ comes from shallow water wave theory, periodic waves with the zero-mean property may better describe water propagation than strictly positive waves. In addition, requiring that the average of $\phi$ is zero ensures that the total mass of water remains constant, that is, there is no net gain or loss of water as the wave travels at speed $c$ (see \cite[Section 4]{angulo}). From a mathematical standpoint, periodic waves with the zero-mean property do not have constant modes in their Fourier series expansions. This fact enables us to consider the traveling wave as a continuous curve of solutions depending only on the wave speed $c$, rather than a two-parameter continuous surface depending on both $c$ and the constant $A$ present in \eqref{CHode}. The existence of such a two-parameter surface has been reported in \cite{GMNP}, where the authors demonstrate the presence of fold points for the CH when the standard construction of periodic solutions connected to the first Hamiltonian structure is considered\footnote{There is another construction related to the second Hamiltonian structure, also reported in \cite{GMNP}. In this case, there are no fold points.}. To make clear for the readers; in our context, fold points are specific points $(c_0, A_0)$ in the parameter regime where solutions exist and such that the kernel of the second variation of the Lyapunov functional at the wave $\phi$ is two-dimensional. For the model in $(\ref{CH})$ with $\omega > 0$, fold points may occur. However, the simplicity of the kernel associated with the second variation of the Lyapunov functional at $\phi$ is not essential for our purposes in establishing the spectral and orbital instability of traveling waves.\\
\indent In order to prove the existence of periodic waves, we are going to use a different technique compared with some standard approaches in the current literature for the existence of periodic waves. First, we show the existence of small-amplitude periodic waves by the bifurcation theory established by Crandall-Rabinowitz theorem (see \cite[Chapter 8]{buffoni-toland}). The existence of small-amplitude periodic waves and a compactness argument enables us to extend the local solution to a global one in the sense that 
\begin{equation}\label{contcurve}c\in\left(\frac{\omega}{2},+\infty\right)\mapsto \phi\in H_{\rm per,m,e}^2\end{equation} exists and it is a continuous curve depending on $c$. Here, $H_{\rm per,m,e}^{s}$ denotes the Sobolev space constituted by periodic functions in $H_{\rm per}^s$ that are even and satisfy the zero-mean property. In addition, as we will see later on, the existence of a continuous curve is sufficient for our purposes. We do not need to assume any additional property regarding the smoothness of the mapping in $(\ref{contcurve})$ with respect to $c$ in order to obtain spectral and orbital stability, as was done, for instance, in \cite{GMNP}.

The Hamiltonian form for the rCH equation (\ref{CH}) is given by 
\begin{equation}
\label{sympl-1}
u_t = J F'(u), \quad 
J = -(1-\partial_x^2)^{-1} \partial_x, \quad 
F'(u) = \frac{3}{2} u^2 - u u_{xx} - \frac{1}{2} u_x^2+\omega u,
\end{equation}
where $J$ is a well-defined operator from $H^s_{\rm per}$ to $H^{s+1}_{\rm per}$ for every $s \in \mathbb{R}$ and $F'(u)$ is defined from $H^{s}_{\rm per}$ to $H^{s-2}_{\rm per}$ for $s > \frac{3}{2}$. The Cauchy problem associated with the problem (\ref{sympl-1}) is locally well-posed in the space $H_{\rm per}^s$ for $s>\frac{3}{2}$. This result is obtained using arguments similar to those established in the case of CH, as the proofs are based on semigroup theory and fixed point methods (see \cite{CE-1998}, \cite{D}, \cite{hakka}, and \cite{HM2002}).

The second order equation (\ref{CHode}) establishes that $\phi$ is a critical point of the Lyapunov functional given by 
\begin{equation}
\label{action-1}
\Gamma_{c}(u) = c E(u) - F(u) + A M(u).
\end{equation}
In addition, the second variation of the Lyapunov functional at the wave $\phi$, commonly called the linearized operator around the wave $\phi$, is then given by
\begin{equation}
\mathcal{L} = - \partial_x (c-\phi) \partial_x + (c-\omega - 3\phi + \phi''),
\label{hill} 
\end{equation} 
which is related to the action functional (\ref{action-1}) as $\mathcal{L}= \Gamma_{c}''(\phi)$. The linearized operator $\mathcal{L} : D(\mathcal{L})=H^2_{\rm per} \subset L^2_{\rm per} \mapsto L^2_{\rm per}$ is a self-adjoint, unbounded operator in $L^2_{\rm per}$ equipped with the standard inner product $\langle \cdot, \cdot \rangle$. 

\indent In our paper, to establish the spectral and orbital stability of the periodic wave $\phi$, we need to prove that the operator $\mathcal{L}$ in $(\ref{hill})$ has exactly two negative eigenvalues, both of which are simple. In addition, we also have to prove that $\ker(\mathcal{L})=[\phi']$. To do so, we employ some tools of the Floquet theory as in \cite{eastham}, \cite{Magnus}, and \cite{natali-neves}.

To start with spectral and orbital stability framework, let us consider the perturbation $v$ to the smooth traveling wave $\phi$ propagating with the same fixed speed $c$ given by
\begin{equation}
	\label{perturbation-v}
	u(x,t) = \phi(x-ct) + v(x-ct,t).
\end{equation}

Substituting the change of variables $(\ref{perturbation-v})$ into the equation $(\ref{CH})$, we obtain
\begin{equation}
	\label{CH-linearization}
	(1 - \partial_x^2) (v_t - cv_x) +\omega v_x +3 \partial_x (\phi v) + 2 v v_x = \partial_x (\phi v_{xx} + \phi' v_x + \phi'' v) + 2 v_x v_{xx} + v v_{xxx}.
\end{equation}
Neglecting the higher order terms in $v$, we obtain the linearized equation
\begin{equation}
	\label{CH-lin}
	v_t = - J \mathcal{L} v, 
\end{equation}
where $J$ is given by (\ref{sympl-1}) and $\mathcal{L}$ 
is the linearized operator defined in (\ref{hill}).

\begin{definition}\label{defstab-spectral}
	We say that the smooth periodic traveling wave $\phi \in H^{\infty}_{\rm per,m}$ is spectrally stable in the evolution problem (\ref{CH}) if the spectrum of $J \mathcal{L}$ in $L^2_{\rm per}$ 
	is located on the imaginary axis. 
\end{definition}

Another important mathematical reason for considering periodic solutions with the zero-mean property is as follows: since $J$ is not a one-to-one operator over $L_{\rm per}^2$, we need to consider it in a suitable subspace contained in $L_{\rm per}^2$. By restricting the spectral problem $J\mathcal{L}v=\lambda v$ in the space $L_{\rm per,m}^2$ constituted by periodic functions in $L_{\rm per}^2$ with the zero-mean property, we obtain a new spectral problem  
\begin{equation}\label{modspecp1}
J\mathcal{L}\big|_{L_{\rm per,m}^2}v=J\mathcal{L}_{\Pi}v=\lambda v, \end{equation}
where $J$ is one-to-one over $L_{\rm per,m}^2$ and $\mathcal{L}_{\Pi}$ is defined as 

\begin{equation}
	\mathcal{L}_{\Pi} = - \partial_x (c-\phi) \partial_x + (c-\omega - 3\phi + \phi'')+\frac{1}{2\pi}\langle\phi', \partial_x\cdot\rangle+\frac{3}{2\pi}\langle\phi, \cdot\rangle.
	\label{hillmeanzero} 
\end{equation} 

\indent Thus, the Definition $\ref{defstab-spectral}$ reads as follows in the new context.

\begin{definition}\label{defstab-spectralzm}
	We say that the smooth periodic traveling wave $\phi \in H^{\infty}_{\rm per,m}$ is spectrally stable in the evolution problem (\ref{CH}) if the spectrum of $J \mathcal{L}_{\Pi}$ in $L^2_{\rm per,m}$ 
	is located on the imaginary axis. 
\end{definition}
\begin{remark}
	Although Definition $\ref{defstab-spectralzm}$ can be used to general periodic solutions, not only to those with the zero-mean property, it provides a suitable connection between the wave and the functional space in which we are studying spectral stability: a periodic solution with the zero-mean property that is spectrally stable in $L_{\rm per,m}^2$.
\end{remark}
\indent As a consequence of the spectral stability, we have, in our case the orbital stability in the energy space $H_{\rm per}^1$ (respectively, $H_{\rm per,m}^1$).
\begin{definition}\label{defstab}
	We say that the periodic traveling wave $\phi \in H^{\infty}_{\rm per,m}$ is orbitally stable in the evolution problem (\ref{CH}) in $H_{\rm per}^1$ (respectively, $H_{\rm per,m}^1$), if for any $\varepsilon>0$ there exists $\delta>0$ such that for any $u_0 \in H^s_{\rm per}$ (respectively, $H_{\rm per,m}^s$) with $s > \frac{3}{2}$ satisfying 
	\begin{equation*}
		\|u_0-\phi\|_{H_{\rm per}^1}<\delta,
	\end{equation*}
the global solution $u \in C(\mathbb{R},H^s_{\rm per})$ (respectively, $C(\mathbb{R,}H_{\rm per,m}^s)$) with the initial data $u_0$ 
satisfies
	\begin{equation*}
	\inf_{l\in{\mathbb R}} \| u(t,\cdot) - \phi(\cdot+l) \|_{H_{\rm per}^1} <\varepsilon
	\end{equation*}
	for all $t\geq0$.
\end{definition}
\indent To prove that $\phi$ is spectrally stable in the sense of Definition \ref{defstab-spectralzm}, we need to study the behaviour of the non-positive spectrum associated with the linear operators $\mathcal{L}$ and $\mathcal{L}_{\Pi}$ in \eqref{hill} and $(\ref{hillmeanzero})$, respectively. Both information are crucial to use an index theorem contained Proposition 4.1 in \cite{Pel-book} in order to establish that $\mathcal{L}_{|_{\{1,\phi-\phi''\}^{\bot}}}\geq0$. This property ensures that the linear operator $\mathcal{L}_{|_{\{1,\phi-\phi''\}^{\bot}}}$ is non-negative, which implies the spectral stability since the Hamiltonian-Krein index is zero (see \cite[Theorem 5.2.11]{KP}). The orbital stability can be seen as an immediate consequence of spectral stability by using the recent approach in \cite[Sections 3 and 4]{ANP}.

The following theorem is the main result of this paper and also summarizes the objectives outlined in the previous paragraphs.

\begin{theorem}
	\label{theorem-stability}
	Let $c>\frac{\omega}{2}$ be fixed.\\
	(i) There exists 
	a continuous mapping $c\in \left(\frac{\omega}{2},+\infty\right) \mapsto \phi_c=\phi\in H_{\rm per,m}^{\infty}$ of $2\pi-$periodic functions that solves equation $(\ref{CHode})$ with constant $A$ given by $(\ref{constA})$.\\
	 (ii) The linear operator $\mathcal{L}_{\Pi}$ defined in $(\ref{hillmeanzero})$ admits one negative eigenvalue which is simple and a simple zero eigenvalue associated with the eigenfunction $\phi'$.\\
	(iii) If $\frac{d}{dc}E(\phi)>0$, the $2\pi$-periodic wave $\phi$ is spectrally and orbitally stable in $H_{\rm per,m}^1$ in the sense of Definitions $\ref{defstab-spectralzm}$ and $\ref{defstab}$, respectively.
\end{theorem}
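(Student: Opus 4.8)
The plan is to prove the three items in sequence, since each relies on the previous one. For part (i), I would first invoke the Crandall--Rabinowitz bifurcation theorem to produce a local branch of small-amplitude, even, zero-mean periodic solutions of \eqref{CHode}. The natural setup is to write \eqref{CHode} as $G(c,\phi)=0$ with $G:\mathbb{R}\times H^2_{\rm per,m,e}\to L^2_{\rm per,m,e}$, linearize about the trivial solution $\phi\equiv 0$, and identify the bifurcation speed by forcing the linearized operator $-c\,\partial_x^2+(c-\omega)$ to have a one-dimensional kernel spanned by $\cos x$; this gives $-c+(c-\omega)=0$ after acting on $\cos x$ via $\partial_x^2\cos x=-\cos x$, i.e.\ a bifurcation at $c=\frac{\omega}{2}$ (the reader should check the precise normalization, but the Fourier mode $e^{ix}$ is the relevant one). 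The transversality (simple-eigenvalue-crossing) hypothesis of Crandall--Rabinowitz is then a routine computation on the $\cos x$ Fourier coefficient. Having the local branch, I would extend it globally on $\left(\frac{\omega}{2},+\infty\right)$ using the analytic global bifurcation machinery of \cite[Chapter 9]{buffoni-toland}: one checks that the relevant Fredholm/properness and compactness conditions hold (the map is a compact perturbation of the identity after inverting the principal part, using that $c-\phi$ stays bounded away from zero along the branch, which must be verified a priori), rules out that the branch reconnects to the trivial solution or blows up in finite $c$, and concludes the branch is unbounded and parametrized by $c$. Smoothness of $c\mapsto\phi_c$ and the bootstrap $\phi\in H^\infty_{\rm per,m}$ follow from the implicit function theorem along the branch (as in \cite[Chapter 1]{jack}) plus elliptic regularity applied to \eqref{CHode}, again using $c-\phi>0$.

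For part (ii), the operator $\mathcal{L}=-\partial_x(c-\phi)\partial_x+(c-\omega-3\phi+\phi'')$ is a Hill-type operator with the nonstandard weight $c-\phi$ in the principal part, so I would bring in Floquet theory (as in \cite{eastham,Magnus,natali-neves}) after a suitable transformation. Two facts must be established: $\mathrm{Ker}(\mathcal{L})=[\phi']$, and $n(\mathcal{L})=1$ with the negative eigenvalue simple. The kernel claim follows by differentiating \eqref{CHode} (equivalently \eqref{third-order}) in $x$, which shows $\phi'$ solves $\mathcal{L}\phi'=0$; since $\phi'$ has exactly two zeros per period (being the derivative of an even, one-hump profile), the Floquet/oscillation theory identifies it as the second eigenfunction in the periodic spectrum ordering, forcing $\mathrm{Ker}(\mathcal{L})$ to be one-dimensional and forcing exactly one eigenvalue below it. Concretely, I would use the standard fact that if the eigenfunction associated with $0$ has exactly two sign changes on $[0,2\pi)$ then there is exactly one negative periodic eigenvalue and $0$ is simple (with the antiperiodic eigenvalues interlacing appropriately). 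An alternative, possibly cleaner route is to exploit the smooth curve from part (i): at $c\to\frac{\omega}{2}^+$ the wave degenerates to zero amplitude where $\mathcal{L}$ reduces to a constant-coefficient operator whose spectrum is explicit, giving $n(\mathcal{L})=1$ and simple kernel there, and then argue the count cannot change along the branch because eigenvalues can only cross $0$ through $\mathrm{Ker}(\mathcal{L})$, which stays one-dimensional (spanned by $\phi'$) along the whole curve — this is where the smoothness and connectedness of the curve from (i) are essential, as the paper emphasizes.

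For part (iii), I would feed the conclusion $n(\mathcal{L})=1$, $\mathrm{Ker}(\mathcal{L})=[\phi']$ into the index-counting argument: restricting to the zero-mean space $L^2_{\rm per,m}$ replaces $\mathcal{L}$ by $\mathcal{L}_\Pi$ in \eqref{hillmeanzero}, and by Proposition 4.1 of \cite{Pel-book} (the standard $n(\mathcal{L}_\Pi)=n(\mathcal{L})-n_0-z_0$ type formula comparing the restricted operator to $\mathcal{L}$ via the rank-two perturbation coming from the constraints) one gets $\mathcal{L}_\Pi\ge 0$. The input needed is the sign of a $2\times 2$ matrix of inner products built from $\phi$, $\phi'$ against $\mathcal{L}^{-1}$ applied to the constraint directions — equivalently involving $\partial_c$ of conserved quantities along the curve — and one must check this matrix contributes exactly the one negative direction, so that the restricted operator loses its single negative eigenvalue and becomes non-negative. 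Once $\mathcal{L}_\Pi\ge 0$ with kernel $[\phi']$ (plus possibly a second, symplectically-trivial kernel direction), the Hamiltonian--Krein index of $J\mathcal{L}_\Pi$ is zero, so by \cite[Theorem 5.2.11]{KP} the spectrum of $J\mathcal{L}_\Pi$ lies on the imaginary axis, which is spectral stability in the sense of Definition \ref{defstab-spectralzm}. Orbital stability in $H^1_{\rm per}$ then follows directly from the abstract framework in \cite[Sections 3 and 4]{ANP}, whose hypotheses are exactly the non-negativity of $\mathcal{L}_\Pi$ on the zero-mean space together with the conserved quantities $M,E,F$ and the local well-posedness already cited.

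The main obstacle I anticipate is the global continuation in part (i): verifying the a priori bound $c-\phi_c>0$ uniformly along the branch (so that \eqref{CHode} stays genuinely elliptic and $\mathcal{L}$ stays a well-defined Hill operator) and ruling out loss of compactness or finite-$c$ blow-up of $\|\phi_c\|_{H^2}$. This is the linchpin for everything downstream, because the spectral count in (ii) via the "follow the curve" argument and the matrix sign computation in (iii) both presuppose a globally defined smooth family with the sign condition $c-\phi>0$ intact.
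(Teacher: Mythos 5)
Your overall architecture matches the paper's (Crandall--Rabinowitz plus global analytic bifurcation for (i), Floquet/Hill theory for (ii), the index formula of \cite[Prop.~4.1]{Pel-book} plus the Hamiltonian--Krein count and \cite{ANP} for (iii)), but part (ii) contains a genuine gap. Your main route asserts that because $\phi'$ has exactly two zeros per period, ``there is exactly one negative periodic eigenvalue and $0$ is simple.'' That is not a standard fact and is false as stated: for a Hill operator, a periodic eigenfunction with two zeros per period corresponds to either the second \emph{or} the third eigenvalue in the ordering $\lambda_0<\lambda_1\le\lambda_2<\cdots$, so oscillation theory alone leaves open $n(\mathcal{L})=1$ with $z(\mathcal{L})=1$, $n(\mathcal{L})=1$ with $z(\mathcal{L})=2$, or $n(\mathcal{L})=2$ with $z(\mathcal{L})=1$. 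Distinguishing these is precisely the hard point, and the paper resolves it by first performing a Liouville change of variables to reduce $-\partial_x(c-\phi)\partial_x+\cdots$ to a genuine Schr\"odinger operator $\mathcal{M}_r=-\partial_x^2+Q(r,x)$ (an inertia-preserving conjugation, via Sylvester's law), then invoking Neves' theorem: the position of the zero eigenvalue is decided by the sign of the Floquet parameter $\theta$ in $\varphi_1(x+2\pi)=\varphi_1(x)+\theta\varphi_2(x)$, which is evaluated (numerically, at a small-amplitude wave) to be negative, and finally propagated to all $c>\frac{\omega}{2}$ by the isoinertial property of \cite{natali-neves}.

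Your proposed alternative for (ii) --- pass to the limit $c\downarrow\frac{\omega}{2}$ where the operator becomes constant-coefficient and then follow the branch --- has the right spirit (it is essentially the isoinertial argument), but your claim that the limit gives ``$n(\mathcal{L})=1$ and simple kernel there'' is wrong: at $\phi\equiv 0$, $c=\frac{\omega}{2}$, the operator is $-\frac{\omega}{2}\partial_x^2-\frac{\omega}{2}$ with eigenvalues $\frac{\omega}{2}(k^2-1)$, so the zero eigenvalue is \emph{double} (spanned by $\cos x$ and $\sin x$). For $a>0$ one kernel direction persists as $\phi'$, and whether the other perturbed eigenvalue moves up (giving $n=1$) or down (giving $n=2$) requires a second-order perturbation computation (or the sign of $\theta$); your argument does not supply it. A secondary, smaller issue: in (iii) the restriction to $L^2_{\rm per,m}$ involves a single constraint $\langle f,1\rangle=0$, so the relevant object is the $1\times 1$ quantity $\langle\mathcal{L}^{-1}1,1\rangle$, not a $2\times 2$ matrix, and its negativity is the essential computation; the paper obtains it in closed form by exhibiting $\mathcal{L}^{-1}1$ as a multiple of $\omega+2\phi-(\omega+2c)\frac{d\phi}{dc}$ and tracking the sign of $d_c$ from the small-amplitude expansion, a step you flag but do not carry out.
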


\section{Existence of periodic traveling waves - Proof of Theorem $\ref{theorem-stability}-(i)$}

\label{sec-2}
In this section, we establish the existence of small-amplitude periodic waves associated with the equation \eqref{CHode}. After that, we show that the small-amplitude periodic waves can be extended to a global branch. In fact, we demonstrate that for all $c > \frac{\omega}{2}$, the local solutions can be extended to a continuous mapping $c \in \left( \frac{\omega}{2}, +\infty \right) \mapsto \phi \in H^2_{\rm per,m,e}$. This property is particularly important in our context, as we cannot guarantee, using the global bifurcation theory, the existence of periodic wave profiles $\phi$ that depend smoothly on the parameter $c > \frac{\omega}{2}$, as required by classical stability theories (see \cite{GrillakisShatahStraussI}). To do so, we rely on the local and global bifurcation theory developed in \cite[Chapters 8 and 9]{buffoni-toland}, respectively. As a first step, we present the following result, which corresponds to Theorem 4.1 in \cite{Pel-book} and will be useful for our purposes.

\begin{proposition}\cite{Pel-book}
	\label{prop-Pel} Let $L:D(L)\subset H\rightarrow H$ be a self-adjoint operator defined in a Hilbert space $H$ with the inner product $\langle \cdot, \cdot \rangle$ such that $L$ has $n(L)$ negative eigenvalues (counting their multiplicities) 
	and $z(L)$ the multiplicity of the zero eigenvalue bounded away from the positive spectrum of $L$. Let $\{ v_j \}_{j = 1}^N$ be a linearly independent set in $H$ and define
	$$
	H_0 = \{ f \in H;\ \{ \langle f, v_j \rangle = 0 \}_{j=1}^N\}.
	$$ 
	Let $\mathcal{A}(\lambda)$ be the matrix-valued function defined by its elements
	\begin{equation}\label{matrixA}
	A_{ij}(\lambda) = \langle (L - \lambda I)^{-1} v_i, v_j \rangle, \quad 1 \leq i,j \leq N, \quad \lambda \notin \sigma(L).
	\end{equation}
	Then, 
	\begin{equation}
		\label{count-neg}
		\left\{ \begin{array}{l}
			n(L \big|_{H_0}) = n(L) - n_0 - z_0, \\
			z(L \big|_{H_0})= z(L) + z_0 - z_{\infty},
		\end{array} \right.
	\end{equation}
	where $n_0$, $z_0$, and $p_0$ are the numbers of negative, zero, and positive eigenvalues of $\lim_{\lambda \uparrow 0} \mathcal{A}(\lambda)$ (counting their multiplicities) and $z_{\infty} = N - n_0 - z_0 - p_0$ is the number of eigenvalues of $A(\lambda)$ diverging in the limit $\lambda \uparrow 0$.
\end{proposition}
\begin{flushright}
	$\square$
\end{flushright}
\begin{remark}\label{pel} Some comments regarding Proposition $\ref{prop-Pel}$ in our context deserve to be highlighted. Let $\phi \in H_{\rm per,m,e}^{\infty}$ be a periodic traveling wave solution associated with the equation $(\ref{CHode})$. By using a standard planar analysis, we see that $\phi$ has only two zeroes over the interval $[0,2\pi)$, so the same behaviour occurs for $\phi'$. Consider $\mathcal{L}$ as the operator in $(\ref{hill})$ defined on $L_{\rm per}^2$, and suppose that $\phi < c$. The operator $\mathcal{L}$ can be rewritten as a Hill operator by applying the change of variables in $(\ref{changvar})$. The Floquet theory in \cite{eastham} and \cite{Magnus} can be used to conclude, from the fact that $\phi'$ has two zeroes over the interval $[0,2\pi)$ and $\mathcal{L} \phi' = 0$, that the eigenvalue $0$ is simple or double and corresponds to the second or third eigenvalue of $\mathcal{L}$. Consider $H_0$ in Proposition~$\ref{prop-Pel}$, defined as $H_0 = L_{\rm per,m}^2$. If $z(\mathcal{L}) \leq 1$, we immediately obtain $z_{\infty} = 0$, the matrix in $(\ref{matrixA})$ consists of just one entry given by $\langle\mathcal{L}^{-1}1,1\rangle$, and the values of $n_0$ and $z_0$ can be expressed, respectively, by
		\begin{equation}\label{z01}
		n_0=\left\{\begin{array}{lllll}
			1,\ \  {\rm if}\ \langle \mathcal{L}^{-1}1,1\rangle<0,\\
			0,\ \  {\rm if}\ \langle \mathcal{L}^{-1}1,1\rangle\geq0,
		\end{array}\right.\ \ \ {\rm and}\ \ \ z_0=\left\{\begin{array}{lllll}
		1,\ \  {\rm if}\ \langle \mathcal{L}^{-1}1,1\rangle=0,\\
		0,\ \  {\rm if}\ \langle \mathcal{L}^{-1}1,1\rangle\neq0.
		\end{array}\right.
			\end{equation}
Both values in $(\ref{z01})$ are essential to establish the existence of small-amplitude periodic waves in Proposition $\ref{propstokes1}$ and in the spectral stability analysis presented in Section 4.
\end{remark}

\indent To prove the existence of small-amplitude periodic waves, we first need some basic facts:
\begin{definition}\label{def1} (i) Let $H$ be a Hilbert space. An unbounded operator $T:D(T)\subset H\rightarrow H$ is a Fredholm operator if ${\rm{range}}(T)$ is closed and $z(T)$ and $c(T)$ are both finite. Here, $c(L)$ indicates the dimension of ${\rm coker}(T)$\footnote{Just to make clear for the readers: ${\rm coker}(T)$ denotes the quotient space given by ${\rm coker}(T)=\bigslant{H}{{\rm range}(T)}$.	}.\\
(ii) The index of an unbounded Fredholm operator $T:D(T)\subset H\rightarrow H$ is given by $ind(T)=z(T)-c(T)\in \mathbb{Z}$. A Fredholm operator is of index zero if $ind(T)=0$.	 
			\end{definition}
			
\begin{lemma}\label{lema3} Let $H$ be a real Hilbert space and $K\subset H$ a closed subspace. It follows that, $$\bigslant{H}{K}\cong K^{\bot},$$
			where the notation $A \cong B$ indicates that $A$ and $B$ are isomorphic. Therefore, if both $A$ and $B$ are finite dimensional, they have the same dimension.
			\end{lemma}	
			\begin{proof} Let us define $\Lambda:\bigslant{H}{K}\rightarrow K^{\bot}$ given by $\Lambda(\widetilde{u})=u-P_Ku$, where $P_K$ is the orthogonal projection from $H$ onto the closed subspace $K$, and $\bigslant{H}{K}$ are formed by cosets $\widetilde{u}=u+K$. It is well known that for any $u\in H$, we obtain $P_Ku\in K$ and $u-P_Ku\in K^{\bot}$, that is, $\Lambda$ is well-defined. In addition, since $||\Lambda(\widetilde{u})||_H=||u-P_Ku||$, we obtain by Pythagorean theorem  $||u||_{H}^2=||P_Ku||_H^2+||u-P_Ku||_H^2=||P_Ku||_H^2+||\Lambda(\widetilde{u})||_H^2.$
				The equality implies $||\Lambda(\widetilde{u})||_H^2=||u||_{H}^2-||P_Ku||_H^2\leq ||u||_{H}^2$, and thus, $\Lambda$ is a bounded operator. $\Lambda$ is an one-to-one operator since for $\Lambda(\widetilde{u})=0$, we have $u=P_Ku$ and this fact automatically implies $u\in K$, that is, $\widetilde{u}=0$. To see that $\Lambda$ is onto, we consider $v\in K^{\bot}$. By the definition of orthogonal projection from $H$ onto the closed subspace $K$, there exists $u\in H$ such that $v=u-P_Ku$, and $\Lambda$ is onto as desired.
				
			\end{proof}
			
			\begin{remark}\label{obs2} We can offer a new perspective on Definition $\ref{def1}$ for a Hilbert space $H$ and an unbounded self-adjoint linear operator $L: D(L) \subset H \rightarrow H$ with closed range. In fact, since $L$ is self-adjoint with closed range, Lemma $\ref{lema3}$ implies that $\bigslant{H}{{\rm range}(L)}= \bigslant{H}{\ker(L)^{\bot}}\cong\ker(L)^{{\bot}{\bot}}=\ker(L)$. Therefore, if $z(L)$ is finite, we can conclude that $L$ is always a Fredholm operator of index zero.				
			\end{remark}
\indent We prove the existence of small-amplitude periodic waves in the next result. 
\begin{proposition}\label{propstokes1}
		There exists $a_0 > 0$ such that for all $a \in (0,a_0)$ there is an even local periodic solution $\phi$ for the problem \eqref{CHode}. The small-amplitude periodic waves are given by the following expansion:
	\begin{equation}\label{varphi-stokes31}
		\phi(x) = a \cos(x) + \frac{a^2}{\omega}\cos(2x) +\mathcal{O}(a^3).
	\end{equation}
	The wave speed $c$ and the constant of integration $A$ in $(\ref{constA})$ in this case are expressed as	
	
	\begin{equation}\label{varphi-stokes3}
		c = \frac{\omega}{2} +\frac{6a^2}{\omega}  + \mathcal{O}(a^4)\ \ \ \ \ \mbox{and}\ \ \ \ \  A=a^2+\mathcal{O}(a^4).
	\end{equation}
	\end{proposition}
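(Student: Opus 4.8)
The plan is to obtain the small-amplitude waves as a local bifurcation branch issuing from the trivial solution $\phi\equiv 0$, via the Crandall--Rabinowitz theorem \cite[Chapter 8]{buffoni-toland}. First I would substitute the expression \eqref{constA} for $A$ into \eqref{CHode} and define
\[
\mathcal{G}(\phi,c):=-(c-\phi)\phi''+(c-\omega)\phi-\tfrac{3}{2}\phi^{2}+\tfrac{1}{2}(\phi')^{2}+\frac{1}{4\pi}\int_{0}^{2\pi}(\phi')^{2}\,dx+\frac{3}{4\pi}\int_{0}^{2\pi}\phi^{2}\,dx .
\]
Integrating over $\mathbb{T}$ and using periodicity shows that $\int_{0}^{2\pi}\mathcal{G}(\phi,c)\,dx=(c-\omega)\int_{0}^{2\pi}\phi\,dx$, so on the even zero-mean class $\mathcal{G}$ lands again in the even zero-mean class; hence $\mathcal{G}\colon H^{2}_{\rm per,m,e}\times\mathbb{R}\to L^{2}_{\rm per,m,e}$ is a well-defined, real-analytic map --- indeed polynomial, of degree two in $\phi$ and affine in $c$ --- with $\mathcal{G}(0,c)=0$ for every $c$. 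The aim is to bifurcate off the trivial branch $\{(0,c)\}$.

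The second step is the spectral analysis along $\{(0,c)\}$. Because the quadratic and the nonlocal terms drop out at $\phi=0$, one gets $D_{\phi}\mathcal{G}(0,c)h=-ch''+(c-\omega)h$, which on $H^{2}_{\rm per,m,e}$ is diagonalized by $\{\cos(kx)\}_{k\ge 1}$ with eigenvalues $c(k^{2}+1)-\omega$. The lowest one vanishes exactly at $c_{0}:=\omega/2$, and there the eigenvalue attached to $\cos(kx)$ with $k\ge 2$ equals $\tfrac{\omega}{2}(k^{2}-1)>0$; therefore $\mathrm{Ker}\,D_{\phi}\mathcal{G}(0,c_{0})=\mathrm{span}\{\cos x\}$ is one-dimensional, and, this operator being self-adjoint with compact resolvent (hence Fredholm of index zero), its range is the codimension-one subspace $\{\cos x\}^{\perp}$ of $L^{2}_{\rm per,m,e}$. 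The Crandall--Rabinowitz transversality condition then follows from $\partial_{c}\big(D_{\phi}\mathcal{G}(0,c_{0})\big)[\cos x]=-(\cos x)''+\cos x=2\cos x$, which is not in the range since $\langle 2\cos x,\cos x\rangle\ne 0$. Applying \cite[Chapter 8]{buffoni-toland} yields a $C^{\infty}$ (indeed analytic) curve $a\mapsto(\phi_{a},c(a))$, $a\in(-a_{0},a_{0})$, of nontrivial solutions of $\mathcal{G}=0$ with $\phi_{0}=0$, $c(0)=\omega/2$ and $\phi_{a}=a\cos x+o(a)$ in $H^{2}_{\rm per}$; restricting to $a\in(0,a_{0})$ gives the asserted even local periodic waves, and bootstrapping in \eqref{CHode} (uniformly elliptic for $a$ small, since $c-\phi_{a}\approx\omega/2>0$) upgrades $\phi_{a}\in H^{2}_{\rm per}$ to $H^{\infty}_{\rm per}$.

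To produce the expansions \eqref{varphi-stokes31}--\eqref{varphi-stokes3} I would feed the Lyapunov--Schmidt ansatz $\phi_{a}=a\cos x+a^{2}\psi_{2}+a^{3}\psi_{3}+\mathcal{O}(a^{4})$, $c(a)=\tfrac{\omega}{2}+c_{1}a+c_{2}a^{2}+\mathcal{O}(a^{3})$ into $\mathcal{G}(\phi_{a},c(a))=0$ and match powers of $a$. Order $a^{1}$ is automatic; at order $a^{2}$ the Fredholm solvability condition (orthogonality of the quadratic source to $\cos x$) forces $c_{1}=0$, the residual equation reduces to $\mathcal{L}_{0}\psi_{2}=\tfrac{3}{2}\cos(2x)$ with $\mathcal{L}_{0}:=-\tfrac{\omega}{2}(\partial_{x}^{2}+1)$, and since $\mathcal{L}_{0}\cos(2x)=\tfrac{3\omega}{2}\cos(2x)$ this gives $\psi_{2}=\tfrac{1}{\omega}\cos(2x)$, while evaluating \eqref{constA} on this approximation gives $A=a^{2}+\mathcal{O}(a^{4})$. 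The order-$a^{3}$ solvability condition then pins down the coefficient of $a^{2}$ in $c(a)$, completing \eqref{varphi-stokes3}; moreover the shift identity $\phi_{-a}(\cdot)=\phi_{a}(\cdot+\pi)$ along the branch (combined with translation invariance of the speed and of the functional $A(\cdot)$) shows $c$ and $A$ are even in $a$, which is why the remainders in \eqref{varphi-stokes3} are $\mathcal{O}(a^{4})$. I expect the main obstacle to be the first two paragraphs --- fixing the function spaces so that the nonlocal constant $A=A(\phi)$ still makes $\mathcal{G}$ a map into a single codimension-one space, and verifying the Fredholm and transversality hypotheses of Crandall--Rabinowitz --- whereas the order-by-order computation is routine bookkeeping once that structure is in hand.
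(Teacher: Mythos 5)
Your proposal is correct and follows essentially the same route as the paper: both set up the map $\mathsf{F}(g,c)$ on $H^2_{\rm per,m,e}$ with the nonlocal constant $A$ built in, identify the bifurcation point $c_0=\omega/2$ (the $k=1$ case of $c_0=\omega/(1+k^2)$) with one-dimensional kernel $[\cos x]$, verify the Fredholm and transversality hypotheses, and invoke Crandall--Rabinowitz from \cite[Chapter 8]{buffoni-toland}. Your order-by-order expansion is precisely the ``standard ansatz argument'' the paper invokes without detail, and your verification that $\mathsf{F}$ maps the even zero-mean class into itself, together with the parity argument giving $\mathcal{O}(a^4)$ remainders, supplies bookkeeping the paper leaves implicit.
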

	
\begin{proof} We shall give the steps how to prove the existence of small-amplitude periodic waves using \cite[Chapter 8]{buffoni-toland}. In fact, let $\mathsf{F}: H_{\rm per,m,e}^{2} \times (\tfrac{\omega}{2},+\infty) \rightarrow L_{\rm per,m,e}^2$ be the smooth map defined by
\begin{equation}\label{F-lyapunov}
	\mathsf{F}(g,r) = -(r-g)g''+(r-\omega)g-\frac{3}{2}g^2+\frac{1}{2}g'^2+\frac{1}{4\pi}\int_0^{2\pi}g'^2dx+\frac{3}{4\pi}\int_0^{2\pi}g^2dx,
\end{equation}
where we recall that  $H_{\rm per,m,e}^{s}$ indicates the Sobolev space constituted by periodic functions in $H_{\rm per}^s$ that are even and satisfy the zero-mean property. We see that $\mathsf{F}(g,r) = 0$ if, and only if, $g \in H_{per,m,e}^{2}$ satisfies \eqref{CHode} with corresponding wave speed $r \in (\tfrac{\omega}{2}, +\infty)$. The Fr\'echet derivative of the function $\mathsf{F}$ with respect to the first variable at the fixed point $(0,r_0)$ is then given by
\begin{equation}\label{Dg}
	D_g \mathsf{F}(0, r_0) f = (-r_0\partial_x^2+(r_0-\omega)) f.
\end{equation}

The nontrivial kernel of $D_g\mathsf{F}(0, r_0)$ is determined by functions $h \in H_{\rm per,m,e}^{2}$ such that
\begin{equation}
	\widehat{h}(k) (r_0-\omega+r_0k^2) = 0,
\end{equation}
where $\widehat{h}$ indicates the Fourier transform of $h$ with frequency $k$ in the periodic setting. We see that $D_g \mathsf{F}(0, r_0)$ has the one-dimensional kernel if, and only if, $r_0 = \frac{\omega}{1+k^2}$ for some $k \in \mathbb{Z}$. In this case, we have
$
	\ker(D_g \mathsf{F}(0, r_0)) = [\tilde{\varphi}_k],
$
where $\tilde{\varphi}_k(x) = \cos(kx)$. In addition, since $D_g \mathsf{F}(0, r_0)$ is a self-adjoint operator on $L^2_{\rm per, m, e}$ with domain in $H^2_{\rm per, m, e}$, the transversality condition $$(-\partial_x^2+1)(\cos(kx))\notin  \ker(D_g \mathsf{F}(0, r_0))^{\bot}={\rm range}D_g \mathsf{F}(0, r_0),$$ is also satisfied. 

 Next, we define the set $\mathcal{S} = \{(g, r) \in U;\ \mathsf{F}(g, r) = 0\},$ 
where $$U=\left\{(g,r)\in H_{\rm per,m,e}^2\times \left(\frac{\omega}{2},+\infty\right);\ g<r\right\}.$$
\indent  Let $(g, r) \in U$ be a real solution of $\mathsf{F}(g,r) = 0$. We want to show that the linear operator $\mathcal{P}_{\Pi}:L_{\rm per,m,e}^2\rightarrow L_{\rm per,m,e}^2$ with domain $D(\mathcal{P}_{\Pi})=H_{\rm per,m,e}^2$ and defined by
\begin{equation}\label{fredalt}
	\mathcal{P}_{\rm \Pi} h = D_g \mathsf{F}(g, r) h = - \partial_x (r-g) \partial_xh + (r-\omega - 3g + g'')h+\frac{1}{2\pi}\langle g',\partial_x h\rangle+\frac{3}{2\pi}\langle g, h\rangle,\end{equation}
is a Fredholm operator of index zero. Indeed, we first observe that the operator
$\mathcal{P} = - \partial_x (r - g) \partial_x + (r - \omega - 3g + g'')$
defined in $L_{\rm per,e}^2$, with domain $H_{\rm per,e}^2$, can be rewritten as a Hill operator by applying a change of variables similar to that in $(\ref{changvar})$, with $g$ in place of $\phi$. The Floquet theory presented in \cite{eastham} and \cite{Magnus} guarantees that the two possible periodic solutions of the equation $\mathcal{P} f = 0$, when $\mathcal{P}$ is defined on the entire space $L_{\rm per}^2$, are $g'$ (odd) and $\mathsf{y}$ (even). Therefore, when $\mathcal{P}$ is restricted to the space $L_{\rm per,e}^2$, we have $z(\mathcal{P}) \leq 1$. In addition, the function $\mathsf{y}$ which is even, may not be periodic and $\{\mathsf{y},g'\}$ is a fundamental set of solutions for the formal equation $\mathcal{P}f=0$. Since $(g, c) \in U$ is a solution of the equation $\mathsf{F}(g, c) = 0$, we immediately see that $g$ is even, and hence $g'$ cannot be considered an element of $\ker(\mathcal{P}_\Pi)$. By Proposition $\ref{prop-Pel}$, we have the relation $z(\mathcal{P}_\Pi) = z(\mathcal{P}) + z_0 - z_\infty$. Since $z(\mathcal{P}) \leq 1$, it follows that $z_\infty = 0$, and by Remark $\ref{pel}$, it follows that $z(\mathcal{P}_\Pi) = z(\mathcal{P}) + z_0 \leq 2$. Hence, the dimension of the kernel of $\mathcal{P}_{\Pi}$ is finite. Since $\mathcal{P}_{\Pi}$, defined in $L_{\rm per,m,e}^2$, is a self-adjoint operator with closed range, it follows by Remark $\ref{obs2}$ that $\mathcal{P}_{\Pi}$ is a Fredholm operator of index zero.\\
\indent The local bifurcation established by Crandall-Rabinowitz theorem (see \cite[Chapter 8]{buffoni-toland} and the beginning of Chapter 9 in \cite{buffoni-toland} for a more suitable explanation) guarantees the existence of an open interval $I$ containing $r_0 > \tfrac{\omega}{2}$, an open ball $B(0,\alpha) \subset H_{\rm per,m,e}^{2}$ for some $\alpha>0$ and a  smooth mapping
$r \in I \mapsto \varphi= \varphi_r \in B(0,\alpha) \subset H_{\rm per,m,e}^{2}$
such that $\mathsf{F}(\varphi,r) = 0$ for all $\omega \in I$ and $\varphi\in B(0,\alpha)$.

For each $k \in \mathbb{N}$, the point $(0, \tilde{r}_k)$ where $\tilde{r}_k = \frac{\omega}{1+k^2}$ is a bifurcation point. Moreover, there exists $a_0 > 0$ and a local bifurcation curve
\begin{equation}\label{localcurve}
	a \in (0,a_0) \mapsto (\varphi_{k,a}, r_{k,a}) \in H_{\rm per,m,e}^{2} \times (0,+\infty)
\end{equation}
which emanates from the point $(0, \tilde{r}_k)$ to obtain small-amplitude even $\frac{2\pi}{k}$-periodic solutions with the zero-mean property for the equation \eqref{CHode}. In addition, we have $r_{k,0} = \tilde{r}_k$, $D_a \varphi_{k,0} = \tilde{\varphi}_k$ and all solutions of $\mathsf{F}(g, r) = 0$ in a neighborhood of $(0, \tilde{r}_k)$ belongs to the curve in $(\ref{localcurve})$ depending on $a \in (0,a_0)$.\\
\indent Finally, let us consider the case $k = 1$, since we are interested in $2\pi$-periodic solutions. Define in $(\ref{localcurve})$ the functions $\phi = \varphi_{1,a}$ and $c = r_{1,a}$. To obtain the expression in $\eqref{varphi-stokes3}$, we can use the Stokes expansions:
\begin{equation}\label{stokes}
\phi(x)=\sum_{n=1}^{+\infty}\phi_n(x)a^n\ \ \ \ \ {\rm and}\ \ \ \ \ \ c=\frac{\omega}{2}+\sum_{n=1}^{+\infty}c_{2n}a^{2n}.
	\end{equation}
where $\phi_1(x) = \cos(x)$ is commonly referred to as the generator of the small-amplitude periodic wave.  Substituting the ansatz in $(\ref{stokes})$ into equation $(\ref{CHode})$, and using the balance of coefficients corresponding to the powers of $a^n$, we obtain that $\phi_2(x) = \frac{1}{\omega} \cos(2x)$ and $c_2 = \frac{6}{\omega}$. In addition, by substituting the expressions for $\phi$ and $c$ from $(\ref{stokes})$ into the constant $A$ given by $(\ref{constA})$, we obtain $A = a^2 + \mathcal{O}(a^4)$.

\end{proof}


	

\indent Next, we extend the local solutions obtained in Proposition $\ref{propstokes1}$ to determine global solutions $\phi$ of equation $(\ref{CHode})$, in terms of the parameter $c$, for all $c > \frac{\omega}{2}$.
\begin{proposition}\label{propstokes2}

	The local solution obtained in Proposition $\ref{propstokes1}$ is global, that is, $\phi$ exists for all $c>\frac{\omega}{2}$. In addition, the pair $(\phi, c) \in U $ is continuous in terms of the parameter $c > \frac{\omega}{2}$ and it satisfies \eqref{CHode}.
\end{proposition}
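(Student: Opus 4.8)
The plan is to promote the local branch of Proposition \ref{propstokes1} --- the one with $k=1$, which emanates from the point $(0,\tfrac{\omega}{2})$ --- to a global one by means of the analytic global bifurcation theory of \cite[Chapter 9]{buffoni-toland}, applied to the analytic map $\mathsf{F}$ of \eqref{F-lyapunov} on the open set $U\subset H^2_{\rm per,m,e}\times(\tfrac{\omega}{2},\infty)$. The structural hypotheses were already verified in Proposition \ref{propstokes1}: $\mathsf{F}$ is analytic and each $D_g\mathsf{F}(g,c)$ is a Fredholm operator of index zero on $U$. Hence the local curve $a\in(0,a_0)\mapsto(\varphi_{1,a},c_{1,a})$ extends to a global continuum $\mathcal{C}\subset U$ of solutions of $\mathsf{F}=0$ which, away from a discrete set of points, is a locally real-analytic curve, and which satisfies the following alternative: either $\mathcal{C}$ leaves every compact subset of $U$ --- so that along $\mathcal{C}$ one has $\|\phi\|_{H^2_{\rm per}}\to\infty$, or $c\uparrow\infty$, or $c\downarrow\tfrac{\omega}{2}$, or $\max\phi\uparrow c$ (loss of uniform ellipticity of $\mathcal{P}$) --- or $\mathcal{C}$ is a closed loop. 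I would then show that only the escape $c\uparrow\infty$ can occur, and that, by connectedness, the image of $\mathcal{C}$ under the projection onto the $c$-axis is all of $(\tfrac{\omega}{2},\infty)$; this is exactly the statement of the Proposition, with the continuity (smoothness) of $c\mapsto\phi_c$ handled afterwards.

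The engine of the argument is a set of a priori estimates on compact subintervals $[\tfrac{\omega}{2}+\delta,M]\subset(\tfrac{\omega}{2},\infty)$. Multiplying \eqref{CHode} by $2\phi'$ and integrating once yields the first integral
\[
(c-\phi)(\phi')^2-(c-\omega)\phi^2+\phi^3-2A\phi=B
\]
for some constant $B=B(c)$. Combined with the zero-mean normalization \eqref{constA} (which expresses $A$ through $\|\phi\|^2_{H^1_{\rm per}}$), the conserved energy \eqref{Eu}, and the admissibility constraint $\phi<c$, this produces bounds $\|\phi\|_{H^1_{\rm per}}\le C(\delta,M)$ and, after bootstrapping through \eqref{CHode}, $\|\phi\|_{H^2_{\rm per}}\le C(\delta,M)$, together with a uniform gap $c-\phi\ge\kappa(\delta,M)>0$. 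These estimates exclude the blow-up of $\|\phi\|_{H^2_{\rm per}}$ and the degeneracy $\max\phi\uparrow c$ on every bounded portion of the $c$-axis. The remaining possibility $c\downarrow\tfrac{\omega}{2}$ with $\phi\not\equiv0$ is ruled out because the same bounds force $\phi\to0$ in $H^2_{\rm per}$ as $c\downarrow\tfrac{\omega}{2}$, after which the Crandall--Rabinowitz local analysis at $(0,\tfrac{\omega}{2})$ identifies the solution set near that point with the known local curve; since the only bifurcation value from the trivial solution lying in $[\tfrac{\omega}{2},\infty)$ is $\tfrac{\omega}{2}$ itself (for $k\ge2$ one has $\tilde c_k=\tfrac{\omega}{1+k^2}<\tfrac{\omega}{2}$, while $k=0$ is forbidden by the zero-mean constraint), this local uniqueness also excludes the closed-loop alternative and any re-encounter with a bifurcation point. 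Therefore $\mathcal{C}$ must extend with $c\uparrow\infty$, and its $c$-projection is $(\tfrac{\omega}{2},\infty)$, giving for each $c>\tfrac{\omega}{2}$ a solution $\phi_c$ of \eqref{CHode} with $(\phi_c,c)\in U$.

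Finally, I would obtain the continuity --- in fact the $C^\infty$ smoothness --- of $c\mapsto\phi_c$ from the implicit function theorem. At any point $(\phi_c,c)\in\mathcal{C}$, $D_g\mathsf{F}(\phi_c,c)=\mathcal{P}_\Pi$ is Fredholm of index zero; as in Proposition \ref{propstokes1} one has $z(\mathcal{P}_\Pi)\le1$ and $\mathcal{P}_\Pi\phi'=0$ --- indeed the rank-one corrections in \eqref{hillmeanzero} annihilate $\phi'$ because $\langle\phi',\phi''\rangle=\langle\phi,\phi'\rangle=0$ by periodicity --- and since $\phi'$ is odd, it does not belong to $H^2_{\rm per,m,e}$. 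Hence $D_g\mathsf{F}(\phi_c,c)$ is injective, thus invertible, on the even mean-zero space, and the implicit function theorem furnishes a unique smooth (and, by analyticity of $\mathsf{F}$, locally real-analytic) local parametrization by $c$; these patches agree on overlaps and glue into the global smooth curve $c\in(\tfrac{\omega}{2},\infty)\mapsto\phi_c\in H^2_{\rm per,m,e}$. Elliptic regularity then upgrades each $\phi_c$ to $H^{\infty}_{\rm per}$ as in Theorem \ref{theorem-stability}(i), and the refined smoothness statement follows exactly as in \cite[Chapter 1]{jack}.

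I expect the main obstacle to be the a priori estimates, and within them the uniform lower bound $c-\phi\ge\kappa(\delta,M)>0$ that keeps the continuum inside $U$: this requires uniform control of the integration constants $A(c)$ and $B(c)$ on compact $c$-intervals via the zero-mean constraint and a quadrature/phase-plane analysis of \eqref{CHode}, which is the delicate point of the argument.
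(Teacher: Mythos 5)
Your overall strategy --- global analytic bifurcation in the sense of \cite[Chapter 9]{buffoni-toland} applied to the map $\mathsf{F}$ of \eqref{F-lyapunov} --- is the same as the paper's, but you route the argument through the ``alternatives'' form of the theorem and therefore make the whole proof rest on a priori estimates that you never actually establish. The paper instead verifies the compactness hypothesis of \cite[Theorem 9.1.1]{buffoni-toland} directly: for a bounded closed subset $\mathcal{R}$ of the solution set it extracts $c_n\to c$ and $g_n\rightharpoonup g$ in $H^2_{\rm per,m,e}$, solves \eqref{CHode} for $g_n''$ as in \eqref{estI} to conclude that $(g_n)$ is bounded in $H^3_{\rm per,m,e}$, and uses the compact embedding $H^3_{\rm per,m,e}\hookrightarrow H^2_{\rm per,m,e}$ to upgrade to strong convergence; the endpoint $c=\tfrac{\omega}{2}$ is handled separately via the small-amplitude expansions \eqref{varphi-stokes31}--\eqref{varphi-stokes3}, which force $g_n\to0$.

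The genuine gap in your proposal is the package of estimates you defer: the bounds $\|\phi\|_{H^1_{\rm per}}\le C(\delta,M)$ and, above all, the uniform gap $c-\phi\ge\kappa(\delta,M)>0$ on compact $c$-intervals. You correctly flag this as the delicate point, but nothing in your sketch produces it. The first integral $(c-\phi)(\phi')^2-(c-\omega)\phi^2+\phi^3-2A\phi=B$ is correctly derived, but it contains two unknown constants $A(c)$ and $B(c)$ and does not by itself bound $\|\phi\|_{H^1_{\rm per}}$; and the appeal to ``the conserved energy \eqref{Eu}'' is vacuous here, since $E$ is a conserved quantity of the time evolution, not a quantity controlled along the stationary branch. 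Without the lower bound on $c-\phi$ the continuum could in principle exit $U$ through the degeneracy $\max\phi\uparrow c$ at a finite value of $c$, so your conclusion that only the escape $c\uparrow\infty$ occurs is unsupported, and the claim that the $c$-projection of the branch is all of $\left(\tfrac{\omega}{2},+\infty\right)$ does not follow. (A quantitative control of $c_n-g_n$ is also what makes the right-hand side of \eqref{estI} bounded in $H^1_{\rm per,e}$ in the paper's argument, so the issue is not an artifact of your route; but in your formulation it is the single load-bearing step and it is missing.) Your closing paragraph on smoothness via the implicit function theorem is fine and matches Remark \ref{remsmoothcurve}, granted $z(\mathcal{P}_{\Pi})\le1$ with the kernel spanned by the odd function $\phi'$.
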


\begin{proof}
	 To obtain that the  local curve \eqref{localcurve} extends to a global one, we need to prove  that every bounded and closed subset $\mathcal{R}\subset\mathcal{S}$ is a compact set on $H_{\rm per,m,e}^{2}\times (\tfrac{\omega}{2},+\infty)$. To this end, we want to prove that $\mathcal{R}$ is sequentially compact, that is, if $\{(g_n,c_n)\}_{n\in\mathbb{N}}$ is sequence in $\mathcal{R}$, then there exists a subsequence of $\{(g_n,c_n)\}_{n\in\mathbb{N}}$ that converges to a point in $\mathcal{R}$. Let $\{(g_n,c_n)\}_{n\in\mathbb{N}}$ be a sequence in $\mathcal{R}$. We obtain a subsequence with the same notation such that $c_n\rightarrow c$ in $\left[\frac{\omega}{2},+\infty\right),$
	and
	$g_n\rightharpoonup g$ in $H_{\rm per,m,e}^2,$ as $n\rightarrow +\infty$. If $c=\frac{\omega}{2}$, we obtain from the expression for $c_n=\frac{\omega}{2}+\frac{6a_n^2}{\omega}+\mathcal{O}(a_n^4)$ in a neighbourhood of $\frac{\omega}{2}$ to the right that $a_n\rightarrow 0$ as $n\rightarrow +\infty$. Therefore, the solution $g_n$ has the form in $(\ref{varphi-stokes31})$ for each $n\in\mathbb{N}$, and it satisfies $g_n\rightarrow 0$ in $H_{\rm per,m,e}^2$. Hence, the result is proved, but the zero solution is not interesting for our purposes. Now, if $c>\frac{\omega}{2}$, we automatically obtain that $g\neq c$ since $g$ has satisfies the zero-mean property. Thus, $(g_n,c_n)\in \mathcal{S}$ implies that $g_n<c_n$ and 
	\begin{equation}\label{estI}
	g_n''=\frac{c_n-\omega}{c_n-g_n}g_n-\frac{3}{2(c_n-g_n)}g_n^2+\frac{1}{2(c_n-g_n)}g_n'^2+\frac{A_n}{c_n-g_n},
	\end{equation}
	where $A_n=\frac{1}{4\pi}\int_0^{2\pi}g_{n}'^2dx+\frac{3}{4\pi}\int_0^{2\pi}g_{n}^2dx.$ The right-hand side of $(\ref{estI})$ is a bounded element in $H_{\rm per,e}^1$, that is, $g_n''$ is bounded in $H_{\rm per,e}^1$. Since $g_n\in H_{\rm per,m,e}^2$ for all $n\in\mathbb{N}$, we deduce that $(g_n)_{n\in\mathbb{N}}$ is a bounded sequence in $H_{\rm per,m,e}^3$, and by the compact embedding $H_{\rm per,m,e}^3\hookrightarrow H_{\rm per,m,e}^2$ we obtain, modulus a subsequence, that $g_n\rightarrow g$ in $H_{\rm per,m,e}^2.$
	In other words, $\mathcal{R}$ is compact in $H_{\rm per,m,e}^2$ as requested.
	 
\indent Since the wave speed $c$ of the wave given by $(\ref{varphi-stokes3})$ is not constant, we can apply \cite[Theorem 9.1.1]{buffoni-toland} to extend globally the local bifurcation curve given in \eqref{localcurve}. More precisely, there is a continuous mapping
\begin{equation}\label{globalcurve}
	c \in \left( \tfrac{\omega}{2}, +\infty \right) \mapsto \phi \in H_{\rm per,m,e}^{2}
\end{equation}
where $\phi$ solves the equation $(\ref{CHode})$.
\end{proof}

\begin{remark}
According to Propositions $\ref{propstokes1}$ and $\ref{propstokes2}$, we can deduce that in the case of the classical CH equation, that is, $\omega=0$ in equation $(\ref{CHode})$, we do not have a continuous mapping $c \in \left( \tfrac{\omega}{2}, +\infty \right) \mapsto \phi \in H_{\rm per,m,e}^{2}$ of periodic waves for the CH equation with a fixed period. Using the arguments in \cite[Section 2]{GMNP}, we can deduce that, for fixed values of $c_0$ and $A_0$ in $(\ref{CHode})$, there is a single solution with the zero-mean property for the case $\omega=0$, not a continuous curve that emanates from the equilibrium solution as determined above.
\end{remark}


\section{The linearized operator - Proof of Theorem $\ref{theorem-stability}-(ii)$}
\label{sec-3}

\subsection{The non-positive spectrum of $\mathcal{L}_{\Pi}$.}

\indent To start with the spectral analysis of the linear operator $\mathcal{L}_{\Pi}$, we first need a basic lemma.

\begin{lemma}
	\label{lemma-L-K}
	The spectrum of $\mathcal{L}$ defined in $L^2_{\rm per}$ with domain $H_{\rm per}^2$ is purely discrete. 
\end{lemma}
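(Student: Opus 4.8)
The plan is to show that $\mathcal{L}$ has compact resolvent; the discreteness of $\sigma(\mathcal{L})$ then follows immediately from the spectral theorem for compact self-adjoint operators together with the spectral mapping theorem. First I would record two elementary facts about the coefficients. Writing $a := c-\phi$ and $b := c-\omega-3\phi+\phi''$, both functions are smooth and $2\pi$-periodic since $\phi\in H^{\infty}_{\rm per}$; moreover the curve produced in Proposition~\ref{propstokes2} satisfies $(\phi,c)\in U$, i.e.\ $\phi<c$ on $\mathbb{T}$, so by continuity and compactness of $\mathbb{T}$ there is $\delta>0$ with $a(x)=c-\phi(x)\ge\delta$ for every $x$. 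The quadratic form associated with $\mathcal{L}$ is
\begin{equation*}
\langle \mathcal{L} u, u\rangle = \int_0^{2\pi}\big( a\,|u'|^2 + b\,|u|^2\big)\,dx \;\ge\; \delta\,\|u'\|_{L^2_{\rm per}}^2 - \|b\|_{L^\infty}\|u\|_{L^2_{\rm per}}^2 \;\ge\; -\|b\|_{L^\infty}\|u\|_{L^2_{\rm per}}^2 ,
\end{equation*}
so the self-adjoint operator $\mathcal{L}$ is bounded from below. I would then fix $\mu < -\|b\|_{L^\infty}\le \inf\sigma(\mathcal{L})$, so that $\mu\in\rho(\mathcal{L})$, the resolvent $R_\mu:=(\mathcal{L}-\mu I)^{-1}$ is a bounded self-adjoint operator on $L^2_{\rm per}$, and $\langle (\mathcal{L}-\mu I)u,u\rangle\ge \varepsilon_0\|u\|_{L^2_{\rm per}}^2$ for some $\varepsilon_0>0$.

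The next step is to upgrade $R_\mu$ to a bounded map into the domain $H^2_{\rm per}$. Given $f\in L^2_{\rm per}$, put $u=R_\mu f\in H^2_{\rm per}$, so that $-(a u')'+(b-\mu)u=f$. Pairing with $u$ gives $\varepsilon_0\|u\|_{L^2_{\rm per}}^2\le \langle f,u\rangle\le \|f\|_{L^2_{\rm per}}\|u\|_{L^2_{\rm per}}$, hence $\|u\|_{L^2_{\rm per}}\le\varepsilon_0^{-1}\|f\|_{L^2_{\rm per}}$; reusing the form identity, $\delta\|u'\|_{L^2_{\rm per}}^2\le \langle f,u\rangle+(\|b\|_{L^\infty}+|\mu|)\|u\|_{L^2_{\rm per}}^2\le C\|f\|_{L^2_{\rm per}}^2$, so that $\|u\|_{H^1_{\rm per}}\le C\|f\|_{L^2_{\rm per}}$. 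Since $a\ge\delta>0$ and $a',b$ are bounded, solving the equation pointwise for $u''$,
\begin{equation*}
u'' = \frac{1}{a}\big(-a' u' + (b-\mu)u - f\big),
\end{equation*}
yields $\|u''\|_{L^2_{\rm per}}\le C\big(\|u'\|_{L^2_{\rm per}}+\|u\|_{L^2_{\rm per}}+\|f\|_{L^2_{\rm per}}\big)\le C\|f\|_{L^2_{\rm per}}$. Therefore $R_\mu:L^2_{\rm per}\to H^2_{\rm per}$ is bounded.

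Finally, since $\mathbb{T}$ is compact, the Rellich--Kondrachov embedding $H^2_{\rm per}\hookrightarrow L^2_{\rm per}$ is compact, so $R_\mu$, viewed as an operator on $L^2_{\rm per}$, is the composition of a bounded operator with a compact one, hence compact and self-adjoint. By the spectral theorem its spectrum consists of $0$ together with a (possibly finite) sequence of real eigenvalues of finite multiplicity accumulating only at $0$; and $0$ is not an eigenvalue, because $R_\mu$ is a bijection of $L^2_{\rm per}$ onto $H^2_{\rm per}=D(\mathcal{L})$. By the spectral mapping theorem $\sigma(\mathcal{L})=\{\,\mu+\lambda^{-1}:\lambda\in\sigma(R_\mu)\setminus\{0\}\,\}$, a discrete set of eigenvalues of finite multiplicity whose only accumulation point is $+\infty$; that is, $\sigma(\mathcal{L})$ is purely discrete. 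There is essentially no serious obstacle: the only points requiring care are the uniform positivity $c-\phi\ge\delta$ (precisely the reason the curve was constructed inside $U$) and the elementary elliptic a priori estimate above. Alternatively, one could apply a Liouville-type change of variables, as used elsewhere in the paper to rewrite $\mathcal{L}$ as a Hill operator $-\partial_y^2+\tilde q(y)$ with periodic boundary conditions on a bounded interval, whose spectrum is purely discrete by the classical Floquet theory of \cite{eastham} and \cite{Magnus}.
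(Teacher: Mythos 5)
Your proof is correct and follows essentially the same route as the paper, which likewise deduces discreteness of $\sigma(\mathcal{L})$ from self-adjointness together with the compact embedding $H^2_{\rm per}\hookrightarrow L^2_{\rm per}$; you have simply supplied the details (uniform ellipticity $c-\phi\ge\delta>0$, the a priori estimate showing the resolvent is bounded into $H^2_{\rm per}$, and the spectral theorem for the resulting compact self-adjoint resolvent) that the paper leaves implicit.
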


\begin{proof}
	Since $c - \phi > 0$ and $\phi \in H^{\infty}_{\rm per,m, e}$, 
	the linearized operator $\mathcal{L}$ with 
	the dense domain $H_{\rm per}^2 \subset L^2_{\rm per}$ 
	is a self-adjoint, unbounded operator in $L_{\rm per}^2$. 
	Consequently, $\sigma(\mathcal{L}) \subset \mathbb{R}$ is purely discrete 
	in $L^2_{\rm per}$ due to the compact embedding of $H^2_{\rm per}$ into $L^2_{\rm per}$.
	
	\end{proof}

The next three results describe the non-positive part 
of the spectrum of $\mathcal{L}$ in $L^2_{\rm per}$. 
The proofs rely on Theorem 3.1 in \cite{neves} 
(see also the classical Floquet theory in \cite{eastham} and \cite{Magnus}), on Sylvester's inertial law theorem in \cite[Theorem 2.2]{lopes} and on Theorem 3.1 in \cite{natali-neves}.

\begin{proposition}\cite{neves}
	\label{teo12}
	Let $\mathcal{M}_\tau = -\partial_x^2+Q(\tau,x)$
	be the Schr\"{o}dinger operator 
	with the even, $2\pi-$periodic, smooth potential $Q=Q(\tau,x)$, where $\tau=(\tau_1,\tau_2)$ is a pair  defined in an open subset $\mathcal{V}\subset \mathbb{R}^2$. Assume that 
	$\mathcal{M}_\tau w = 0$ is satisfied by a linear combination 
	of two solutions $\varphi_1$ and $\varphi_2$ satisfying
	$
	\varphi_1(x+2\pi) = \varphi_1(x) + \theta \varphi_2(x),
	$
	and 
	$
	\varphi_2(x+2\pi) = \varphi_2(x),
	$
	with some $\theta \in \mathbb{R}$. Assume that $\varphi_2$ has two zeros on the period of $Q$. The zero eigenvalue of $\mathcal{M}_\tau$ in $L^2_{\rm per}$ is simple if $\theta \neq 0$ and double if $\theta = 0$. It is the second eigenvalue of $\mathcal{M}_\tau$ if $\theta \geq 0$ and the third eigenvalue of $\mathcal{M}_\tau$ if $\theta < 0$. 
\end{proposition}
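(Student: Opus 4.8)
The plan is to read off the position and multiplicity of the zero eigenvalue of $\mathcal{M}_r$ from the Floquet theory of Hill's equation, as in \cite{eastham} and \cite{Magnus}. Fix $r$ and let $\Delta(\lambda)$ be the Hill discriminant (the trace of the monodromy matrix) of $-w''+Q(r,x)w=\lambda w$; recall that $\lambda$ is a $2\pi$-periodic point of the spectrum precisely when $\Delta(\lambda)=2$. By hypothesis, at $\lambda=0$ the pair $\{\varphi_1,\varphi_2\}$ is a fundamental system of $\mathcal{M}_r w=0$, with $\varphi_2$ being $2\pi$-periodic and $\varphi_1(x+2\pi)=\varphi_1(x)+\theta\,\varphi_2(x)$; hence, in the basis $\{\varphi_1,\varphi_2\}$, the monodromy matrix at $\lambda=0$ is $\begin{pmatrix}1&0\\\theta&1\end{pmatrix}$, so $\Delta(0)=2$ and $0$ lies in the $2\pi$-periodic spectrum of $\mathcal{M}_r$. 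Since the $L^2_{\rm per}$-eigenspace of $\mathcal{M}_r$ at $0$ is exactly the space of $2\pi$-periodic solutions of $\mathcal{M}_r w=0$, and $a\varphi_1+b\varphi_2$ is $2\pi$-periodic iff $a\theta=0$, that eigenspace is one-dimensional when $\theta\neq0$ and two-dimensional when $\theta=0$. This yields the simple/double alternative.

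Next I would place $0$ in the ordered list of $2\pi$-periodic eigenvalues $\lambda_0<\lambda_1\leq\lambda_2<\lambda_3\leq\lambda_4<\cdots$ by means of the classical oscillation theorem for Hill's equation \cite{eastham} and \cite{Magnus}: for $j\geq1$, a $2\pi$-periodic eigenfunction attached to $\lambda_{2j-1}$ or $\lambda_{2j}$ has exactly $2j$ zeros in a period, while the one attached to $\lambda_0$ has none. Since $\varphi_2$ is a $2\pi$-periodic eigenfunction of $\mathcal{M}_r$ at the eigenvalue $0$ with, by hypothesis, two zeros in its period, it follows that $0\in\{\lambda_1,\lambda_2\}$; that is, $0$ is the second or the third eigenvalue of $\mathcal{M}_r$ in $L^2_{\rm per}$. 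If $\theta=0$, the eigenvalue is double, so $\lambda_1=\lambda_2=0$ and $0$ is at once the second and third eigenvalue, matching the statement.

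It remains to decide, when $\theta\neq0$, whether $0=\lambda_1$ or $0=\lambda_2$, and here I would use the sign of $\dot\Delta(0)$, writing $\dot\Delta$ for $d\Delta/d\lambda$. On an open spectral gap $(\lambda_1,\lambda_2)$ one has $\Delta(\lambda)>2=\Delta(\lambda_1)=\Delta(\lambda_2)$, so $\dot\Delta(\lambda_1)>0$ and $\dot\Delta(\lambda_2)<0$; hence $0=\lambda_1$ iff $\dot\Delta(0)>0$ and $0=\lambda_2$ iff $\dot\Delta(0)<0$. To relate $\dot\Delta(0)$ to $\theta$, differentiate the period map in $\lambda$: the function $u=\partial_\lambda w\big|_{\lambda=0}$ solves $\mathcal{M}_r u=w$, and substituting the variation-of-parameters representation built from $\{\varphi_1,\varphi_2\}$ into the trace of the monodromy matrix yields, when $\varphi_1,\varphi_2$ are normalized to have Wronskian equal to $1$ (which is also what makes the sign of $\theta$ unambiguous), an identity of the form $\dot\Delta(0)=-\,\theta\int_0^{2\pi}\varphi_2(x)^2\,dx$; here the evenness of $Q$ lets one choose $\varphi_1,\varphi_2$ with definite parity, which simplifies the computation. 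In particular $\theta\neq0$ forces $\dot\Delta(0)\neq0$, so $0$ is a strict band edge and $\lambda_1<\lambda_2$; moreover $\dot\Delta(0)$ has the opposite sign to $\theta$. Therefore $\theta<0$ gives $0=\lambda_1$ (second eigenvalue) and $\theta>0$ gives $0=\lambda_2$ (third eigenvalue), which together with the case $\theta=0$ is exactly the assertion.

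The step I expect to be the main obstacle is this last one: obtaining the precise sign (and form) of $\dot\Delta(0)$ in terms of $\theta$. It requires careful bookkeeping in the variation-of-parameters formula and an explicit choice of the Wronskian normalization so that ``$\theta>0$'' is meaningful; the parity reduction coming from $Q$ even keeps it tractable. The remaining ingredients — the monodromy computation at $\lambda=0$, the oscillation count, and the behavior of $\Delta$ at the edges of a gap — are standard Floquet theory.
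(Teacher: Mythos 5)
The paper does not actually prove this proposition: it is quoted from \cite{neves} and closed with a box, so there is no internal argument to compare against. Your proof is, in substance, the one in that reference and in the classical sources \cite{eastham} and \cite{Magnus}: the multiplicity claim follows from the monodromy matrix $\left(\begin{smallmatrix}1&0\\ \theta&1\end{smallmatrix}\right)$ at $\lambda=0$; the localization $0\in\{\lambda_1,\lambda_2\}$ follows from Haupt's oscillation theorem applied to the periodic eigenfunction $\varphi_2$ with its two zeros per period; and the tie-break comes from the sign of $\dot\Delta(0)$ together with the fact that $\Delta>2$ on the open gap $(\lambda_1,\lambda_2)$, so $\dot\Delta(\lambda_1)>0>\dot\Delta(\lambda_2)$. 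All of this is correct and is the right route.

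The step you flag as the main obstacle is indeed the only soft spot, and as written it carries a sign error. A direct computation with the normalized fundamental system gives $\dot\Delta(0)=\theta\, W^{-1}\int_0^{2\pi}\varphi_2^2\,dx$, where $W=\varphi_1\varphi_2'-\varphi_1'\varphi_2$ is the constant Wronskian of the pair in the order $(\varphi_1,\varphi_2)$. Your identity $\dot\Delta(0)=-\theta\int_0^{2\pi}\varphi_2^2\,dx$ therefore presupposes $W=-1$, not $W=+1$ as you state; with $W=+1$ the dichotomy would be reversed and $\theta>0$ would yield the \emph{second} eigenvalue. Note that the sign of $\theta$ alone is not intrinsic (replacing $\varphi_1$ by $-\varphi_1$ flips it, and the proposition as stated does not fix a normalization); only the sign of $\theta/W$ is well defined. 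The convention that makes the stated conclusion true --- and the one implicitly used in the paper's initial-value problem (\ref{PVI1}), where $\varphi_1(0)=-1/\varphi_2'(0)$, $\varphi_1'(0)=0$, $\varphi_2(0)=0$, hence $W=-1$ --- is $\varphi_1\varphi_2'-\varphi_1'\varphi_2=-1$. A quick check of the general identity: for $Q\equiv 0$ take $\varphi_2=1$, $\varphi_1=x$, so $\theta=2\pi$ and $W=-1$, giving $\dot\Delta(0)=-4\pi^2$, consistent with $\Delta(\lambda)=2\cos(2\pi\sqrt{\lambda})$. Once you fix this normalization explicitly, your argument is complete.
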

\begin{flushright}
	$\square$
\end{flushright}

\begin{proposition}\cite{lopes}
	\label{prop-Lopes} Let $L$ be a self-adjoint operator in a Hilbert space $H$ and $S$ be a bounded invertible operator in $H$. Then, $S L S^*$ and $L$ have the same inertia, that is, the dimensions of the negative, null, and positive subspaces of $H$ are the same.
\end{proposition}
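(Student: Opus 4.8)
The plan is to prove Proposition~\ref{prop-Lopes} directly from Sylvester's inertia law, keeping in mind that $L$ is unbounded (so some care is required in handling the spectral subspaces) while $S$ is bounded with bounded inverse. First I would introduce the quadratic form $q(u) := \langle Lu, u\rangle$ on the form domain $D(q)$ associated with the self-adjoint operator $L$, and observe that the \emph{inertia} of $L$ — the triple $(n(L), z(L), p(L))$ of dimensions of the maximal negative, the null, and the maximal positive $L$-invariant subspaces — coincides with the inertia read off from $q$: namely $n(L) = \sup\{\dim V : V \subset D(q),\ q|_{V\setminus\{0\}} < 0\}$, $z(L) = \dim\ker L$, and similarly for $p(L)$, by the spectral theorem. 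This reformulation is what lets one work with a nonlinear change of variables.

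Next I would relate the forms of $SLS^*$ and $L$. For $u$ in the form domain of $SLS^*$ one has $S^* u$ in $D(q)$ (using boundedness of $S^*$ and the fact that $S^*$ maps the form domain into itself, which follows from $S$ bounded invertible and $L$ self-adjoint), and the key algebraic identity
\begin{equation*}
\langle SLS^* u, u\rangle = \langle L S^* u, S^* u\rangle,
\end{equation*}
so that the form of $SLS^*$ evaluated at $u$ equals the form of $L$ evaluated at $S^* u$. Since $S^*$ is a bijection of $H$ onto $H$ that restricts to a bijection of the relevant form domains, it maps any subspace on which $q_{SLS^*}$ is negative definite bijectively onto a subspace of the same dimension on which $q_L$ is negative definite, and conversely via $(S^*)^{-1}$. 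Taking suprema of dimensions over such subspaces gives $n(SLS^*) = n(L)$, and the identical argument with ``negative'' replaced by ``positive'' gives $p(SLS^*) = p(L)$. For the null spaces one argues directly: $SLS^* u = 0 \iff LS^* u = 0$ (since $S$ is injective) $\iff S^* u \in \ker L$, so $\ker(SLS^*) = (S^*)^{-1}\ker L$ and thus $z(SLS^*) = z(L)$.

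The main obstacle I anticipate is the domain bookkeeping for the unbounded operator: one must verify that $S^*$ genuinely maps the form domain $D(|L|^{1/2})$ onto itself (not merely into $D(L)$), and that $SLS^*$ is again self-adjoint with form domain $(S^*)^{-1}D(|L|^{1/2})$, so that the variational characterization of inertia applies to it verbatim. This is where boundedness of both $S$ and $S^{-1}$ is essential — it guarantees $D(|L|^{1/2})$ and $(S^*)^{-1}D(|L|^{1/2})$ are the same set with equivalent norms, and that no spectral mass is lost or created at infinity. Once this point is settled, the conclusion is exactly Sylvester's law of inertia applied to the congruence $q_L \mapsto q_L(S^* \cdot)$, and in the application of interest $L = \mathcal{L}$ or a Schr\"odinger reduction of it has compact resolvent, so all three numbers in the inertia are finite and the statement is unambiguous.
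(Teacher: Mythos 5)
The paper does not actually prove Proposition~\ref{prop-Lopes}: it is imported verbatim from \cite{lopes} and marked with a $\square$, so there is no in-paper argument to compare against. Judged on its own, your proof is the standard (and correct) infinite-dimensional Sylvester argument: characterize $n(L)$ and $p(L)$ variationally via Glazman's lemma as suprema of dimensions of subspaces of the form domain on which $q_L(u)=\langle Lu,u\rangle$ is definite, use the congruence identity $\langle SLS^*u,u\rangle=\langle LS^*u,S^*u\rangle$ together with the fact that $S^*$ is a bounded bijection with bounded inverse to transport definite subspaces back and forth without changing dimension, and compute the kernel directly from $\ker(SLS^*)=(S^*)^{-1}\ker L$. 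This is almost certainly the same mechanism as in \cite{lopes}, and it is adequate for the use made of the proposition in Proposition~\ref{theolinop}, where $L=\mathcal{M}_r$ is a semibounded Hill operator with compact resolvent and $S$ is multiplication by the smooth, positive, bounded weight coming from (\ref{changvar}), so all inertia indices are finite and all domains are classical.

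One claim in your third paragraph is, however, false as stated and should be removed: boundedness of $S$ and $S^{-1}$ does \emph{not} guarantee that $(S^*)^{-1}D(|L|^{1/2})$ equals $D(|L|^{1/2})$. For instance, with $L=-\partial_x^2$ on $L^2_{\rm per}$ (form domain $H^1_{\rm per}$) and $S$ multiplication by a function $m$ with $0<c_1\leq m\leq c_2$ but $m'\notin L^2$, the operator $S$ is bounded and boundedly invertible on $L^2_{\rm per}$ yet does not map $H^1_{\rm per}$ into itself. The good news is that your argument never needs this: the form domain of $SLS^*$ is by construction $(S^*)^{-1}D(|L|^{1/2})$, whatever set that is, and all that the transport of definite subspaces requires is that $S^*$ restrict to a bijection between the two form domains --- which holds automatically. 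You should also note that the variational characterization of both $n$ and $p$ simultaneously requires the spectral-theorem form of Glazman's lemma when $L$ is not semibounded; in the present application $\mathcal{M}_r$ is bounded below with purely discrete spectrum, so this is harmless, but for the proposition in full generality it is the one place where a genuine appeal to the spectral theorem (rather than min--max for the bottom of the spectrum) is needed.
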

\begin{flushright}
	$\square$
\end{flushright}

We now characterize the non-positive spectrum of $\mathcal{L}$ by using the following results below.

\begin{proposition}
	\label{theolinop1}
	Let $c>\frac{\omega}{2}$ be fixed. Consider the linearized operator $\mathcal{L} : D(\mathcal{L})=H^2_{\rm per} \subset L^2_{\rm per} \to L^2_{\rm per}$ as in $(\ref{hill})$. The spectral problem $\mathcal{L}v=\lambda v$ can be written as the weighted spectral problem $\mathcal{M}_{\tau}w=\lambda(c-\phi)^{-1} w$, where $\tau=(c,\omega)$ and $\mathcal{M}_{\tau}=-\partial_x^2+Q(\tau,x)$ is a Hill operator with a $\text{smooth, even, and $2\pi-$periodic}$ potential $Q(\tau,x)$. If the set $\{ y_1,y_2\}$ is a fundamental set for the equation $\mathcal{L} v=0$, we obtain that 
	\begin{equation}
		\label{rel-varphi22}
		\{ \varphi_1, \varphi_2 \} = \left\{\left(\frac{c-\phi(0)}{c-\phi}\right)^{-1/2} y_1,\left(\frac{c-\phi(0)}{c-\phi}\right)^{-1/2} y_2\right\}
	\end{equation}
	is the fundamental set of solutions associated with the equation $\mathcal{M}_\tau w=0$. In addition, the spectral problem $\mathcal{M}_{\tau}w=\lambda(c-\phi)^{-1} w$ has the same inertia as the linear operator $S\mathcal{M}_{\tau}S$, where $S=(c-\phi)^{1/2}$.

\end{proposition}

\begin{proof}
Our approach in this proposition is based on \cite[Theorem 4]{GMNP}. In order to transform the spectral problem $\mathcal{L}v = \lambda v$ into a convenient spectral problem involving  the Schr\"odinger operator $\mathcal{M}_{\tau}$, as stated in Proposition \ref{teo12}, we set $\tau =  (c, \omega)$. We write $\mathcal{L} v = \lambda v$ as the second-order differential equation
	\begin{equation}
		\label{perspectprob}
		p_1(x)v''+p_2(x)v'+(p_3(x)+\lambda)v=0, 
	\end{equation}
	with $p_1(x) = c-\phi(x)$, $p_2(x) = -\phi'(x)$, and $p_3(x) = -\phi''(x) + 3 \phi(x) - c+\omega$. The Liouville transformation
	\begin{equation}\label{liouv}
		D(x)=-\int_0^x\frac{\phi'(s)}{c-\phi(s)}ds=\ln\left(\frac{c-\phi(x)}{c-\phi(0)}\right)
	\end{equation}
	is nonsingular since $c - \phi > 0$.
	This last fact enables us to use the following change of variables 
	\begin{equation}\label{changvar}
		v(x) = w(x) e^{-\frac{1}{2} D(x)} = w(x)\sqrt{\frac{c-\phi(0)}{c-\phi(x)}}.
	\end{equation}
	into the second-order equation (\ref{perspectprob}) to obtain weighted spectral problem
	\begin{equation}
		\label{hilleq}
		-w''(x)+Q(\tau,x)w(x)=\lambda (c - \phi(x))^{-1} w(x),
	\end{equation}
	where 
	\begin{equation}\label{perpot}
		Q(\tau,x) = \frac{c-\omega-3\phi(x)}{c - \phi(x)} + \frac{\phi''(x)}{2(c - \phi(x))} +\frac{1}{4}\left(\frac{\phi'(x)}{c-\phi(x)}\right)^2. 
	\end{equation}
	
	The operator $\mathcal{M}_\tau$ satisfies the condition of Proposition \ref{teo12} 
	since $Q$ defined in $(\ref{perpot})$ is even, $2\pi$-periodic, and smooth. Therefore, if the set $\{ y_1,y_2\}$ is a fundamental set for the equation $\mathcal{L} v=0$, we obtain that 
	\begin{equation}
		\label{rel-varphi}
		\{ \varphi_1, \varphi_2 \} = \left\{\left(\frac{c-\phi(0)}{c-\phi}\right)^{-1/2} y_1,\left(\frac{c-\phi(0)}{c-\phi}\right)^{-1/2} y_2\right\},
	\end{equation}
	is the fundamental set of solutions associated with the equation $\mathcal{M}_\tau w=0$. In addition, $\varphi_1$ and $\varphi_2$ are related to each other through the equality
	\begin{equation}\label{relpq1}
		\varphi_1(x+2\pi) = \varphi_1(x) + \theta \varphi_2(x).
	\end{equation} 
	\indent Finally, let $S = (c-\phi)^{1/2}$ be a bounded and invertible multiplication operator defined on $L_{\rm per}^2$. With the transformation $w = (c-\phi)^{1/2}\widetilde{w}$, the spectral problem $(\ref{hilleq})$ can be rewritten as the spectral problem for the operator $S\mathcal{M}_{\tau}S$, which, by Proposition $\ref{prop-Lopes}$, has the same inertia as the Schr\"odinger operator $\mathcal{M}_{\tau} = -\partial_x^2 + Q(\tau,x)$, as required.

\end{proof}

\begin{proposition}\label{propR0}
	Let $c>\frac{\omega}{2}$ be fixed. Consider the linearized operator $\mathcal{L}_{\Pi} : D(\mathcal{L}_{\Pi})=H^2_{\rm per,m} \subset L^2_{\rm per,m} \to L^2_{\rm per,m}$ as in $(\ref{hillmeanzero})$. Then,  $\ker(\mathcal{L}_{\Pi})=[\phi']$ and $n(\mathcal{L}_{\Pi})=1$.
\end{proposition}
\begin{proof}
First, we see by $(\ref{CHode})$ and $(\ref{constA})$ that $\mathcal{L}_{\Pi}\phi=c(\phi''-\phi)+\omega\phi.$
Thus,
\begin{equation}\label{quadraPI}
	\langle\mathcal{L}_{\Pi}\phi,\phi\rangle=-c\int_0^{2\pi}\phi'^2dx-c\int_0^{2\pi}\phi^2dx+\omega\int_0^{2\pi}\phi^2dx.
\end{equation}
By applying the Poincar\'e–Wirtinger inequality to the first term on the right-hand side of equality 
$(\ref{quadraPI})$, we deduce that
\begin{equation}\label{quadraPI1}
	\langle\mathcal{L}_{\Pi}\phi,\phi\rangle\leq(-2c+\omega)\int_0^{2\pi}\phi^2dx.
\end{equation}
From inequality $(\ref{quadraPI1})$, together with the condition $c > \frac{\omega}{2}$ and the standard min–max theorem, it follows that $\mathcal{L}_{\Pi}$ has at least one negative eigenvalue, that is, we have $n(\mathcal{L}_{\Pi})\geq1$.\\
 \indent Next, since $\mathcal{M}_{\tau}$ is a Hill operator with even smooth potential $Q(\tau,x)$ and $\phi'$ has two zeroes over the interval $[0,2\pi)$, we obtain by Floquet theory (see for instance \cite{eastham} and \cite{Magnus}) that $\varphi_2$ has the same property, and therefore the zero eigenvalue of $\mathcal{M}_{\tau}$ is the second or the third eigenvalue. This implies, by Proposition $\ref{theolinop1}$, that $1 \leq n(\mathcal{L}) \leq 2$. Assume now that $n(\mathcal{L}) = 1$. By Proposition $\ref{prop-Pel}$, it follows, since $n(\mathcal{L}_{\Pi})\geq1$, that $n(\mathcal{L}_{\Pi})=n(\mathcal{L})-n_0-z_0=1-0-0=1$. In particular, $n_0 = z_0 = 0$. Even if $z_{\infty} = 1$, which implies $z(\mathcal{L}) = 2$, we note that, since $\phi'$ has the zero-mean property and $\mathcal{L}_{\Pi}\phi'=\mathcal{L}\phi' = 0$, it follows that $[\phi'] \subset \ker(\mathcal{L}_{\Pi})$. Thus, we automatically obtain $z(\mathcal{L}_{\Pi}) = 2 + 0 - 1 = 1$ or, in the case $z(\mathcal{L}) = 1$, that $z(\mathcal{L}_{\Pi}) = 1 + 0 - 0 = 1$. This fact proves the proposition in the case $n(\mathcal{L})=1$.\\
\indent We consider the case $n(\mathcal{L})=2$. From Proposition $\ref{teo12}$, it follows that $\theta<0$ and $\ker(\mathcal{L})=[\phi']$. Since $\mathcal{L}$ is a self-adjoint operator, it is invariant on the orthogonal complement subspace $\ker(\mathcal{L})^{\perp} = [\phi']^{\perp}$, that is, $\mathcal{L} : [\phi']^{\perp} \to [\phi']^{\perp}$. Moreover, because $\mathcal{L}$ is self-adjoint with closed range, we also have ${\rm range}(\mathcal{L}) = \ker(\mathcal{L})^{\perp}$. Consequently, there exists $h \in H_{\mathrm{per,e}}^{2}$ such that $\mathcal{L}h = 1$. Using the method of variation of parameters applied to the equation $\mathcal{L}h=1$, we see that $h$ can be expressed in terms of the fundamental set $\{y_1,y_2\}=\{y_1,\phi'\}$ as
\begin{equation}\label{varpar13}
	h(x)=y_1(x)\int_0^x\frac{\phi'(s)}{(c-\phi(s))W(y_1,y_2)(s)}ds-\phi'(x)\int_0^x \frac{y_1(s)}{(c-\phi(s))W(y_1,y_2)(s)}ds,
\end{equation}
where, $W(y_1,y_2)(s)$ indicates the Wronskian determinant of $y_1$ and $y_2$ that can be determined by using Abel's formula as
\begin{equation}\label{abel}
	W(y_1,y_2)(s)= \beta e^{\int_0^s\frac{\phi'(t)}{c-\phi(t)}dt}=\beta e^{-\int_0^s\frac{d}{dt}\ln(c-\phi(t))dt}=\beta\left(\frac{c-\phi(0)}{c-\phi(s)}\right),
\end{equation}	
where $\beta$ is a constant that can be assumed to be equal to one. Indeed, since $\{y_1,\phi'\}$ is a fundamental set of solutions of the homogeneous equation $\mathcal{L}h=0$, we deduce that $\beta \neq 0$. In addition, since the solution $\phi$ is even, it follows that $\phi'$ is odd and $\phi'(0)=0$, with $\phi''(0)\neq 0$, because $\phi'$ is non-trivial and solves the equation $\mathcal{L}\phi'=0$. By considering the normalized initial condition $y_1(0)=\frac{1}{\phi''(0)}$, we conclude that $W(y_1,\phi')(0)=1$, so that $\beta=1$, as required. By $(\ref{varpar13})$ and $(\ref{abel})$, we obtain a more convenient expression for the function $h$ as follows:
\begin{equation}\label{varpar1}
	h(x)=\frac{1}{c-\phi(0)}\left[(\phi(x)-\phi(0))y_1(x)-\phi'(x)\int_0^xy_1(s)ds\right].
\end{equation}
Since $\theta<0$, we obtain that $y_1$ is not periodic. Thus, by integration by parts, we obtain from $(\ref{varpar1})$
\begin{equation}\label{varpar2}
	\int_0^{2\pi}hdx=\frac{2}{c-\phi(0)}\left[-\phi(0)\int_0^{2\pi}y_1dx+\int_0^{2\pi} y_1\phi dx\right].
\end{equation}
\indent The next step is to obtain convenient expressions for the integrals $\int_0^{2\pi} y_1(x)dx$ and $\int_0^{2\pi} y_1(x)\phi(x)dx$. Indeed, multiplying the equation $\mathcal{L}h = 1$ by $y_1$, integrating over the interval $[0,2\pi]$, performing two integration by parts, and using the fact that $h$ is even and periodic, we deduce from $(\ref{varpar1})$ that
\begin{equation}\label{varpar3}
	\int_0^{2\pi}y_1dx=(c-\phi(0))h(0)y_1'(2\pi)=0.
\end{equation}	
\indent Again, since $\mathcal{L}:[\phi']^{\bot}\rightarrow [\phi']^{\bot}$ and $\phi$ is periodic and even, it follows that there exists $\chi\in H_{\rm per,e}^2$, such that $\mathcal{L}\chi=\phi$. By equation $(\ref{CHode})$, we have
\begin{equation}\label{2L}
	\mathcal{L}\phi=c(\phi''-\phi)+\omega\phi-2A.
\end{equation}
Using that $\mathcal{L}h=1$ and
\begin{equation}\label{3L}
	\mathcal{L}1=(c-\omega)-3\phi+\phi'',
\end{equation}
we conclude that $\chi$ can be expressed by
\begin{equation}\label{chi}
	\chi(x)=-\frac{c}{2c+\omega}+\frac{1}{2c+\omega}\phi(x)+\frac{c(c-\omega)+2A}{2c+\omega}h(x).
\end{equation}
Thus, multiplying the equation $\mathcal{L}\chi =\phi$ by $y_1$, integrating over the interval $[0,2\pi]$, performing two integration by parts, and using the fact that $\chi$ is even and periodic, we deduce from $(\ref{varpar1})$ and $(\ref{chi})$ that
\begin{equation}\label{varpar4}
	\int_0^{2\pi}y_1\phi dx=(c-\phi(0))\chi(0)y_1'(2\pi)=-\frac{(c-\phi(0))^2}{2c+\omega}y_1'(2\pi).
\end{equation}	
Combining the information from $(\ref{varpar2})$, $(\ref{varpar3})$, and $(\ref{varpar4})$, and using the fact that $c - \phi(0) > 0$, we conclude that
\begin{equation}\label{varpar5}
	\int_0^{2\pi}hdx=-2y_1'(2\pi)\frac{c-\phi(0)}{2c+\omega}.
\end{equation}	
\indent We need to calculate $y_1'(2\pi)$. In fact, first we need to use $(\ref{rel-varphi})$ and the fact that $\varphi_1(x)=\left(\frac{c-\phi(0)}{c-\phi(x)}\right)^{-1/2}y_1(x)$, where $\varphi_1$ is not periodic. Together with the $2\pi-$periodic function $\varphi_2(x)=\left(\frac{c-\phi(0)}{c-\phi(x)}\right)^{-1/2}\phi'(x)$, we have the fundamental set $\{\varphi_1,\varphi_2\}$ of solutions of the equation $\mathcal{M}_{\tau}w = 0$. Thus, we obtain by $(\ref{relpq1})$ and the explicit expressions of $\varphi_1$ and $\varphi_2$ that 
\begin{equation}\label{dery1}
	y_1'(2\pi)=\varphi_1'(2\pi)=\theta\varphi_2'(0)=\theta\phi''(0).
\end{equation}
Since $\phi''(0) < 0$ and $\theta < 0$, we obtain from $(\ref{dery1})$, that $y_1'(2\pi) > 0$. From $(\ref{varpar5})$ and the fact that $c - \phi(0) > 0$, it follows that $\langle \mathcal{L}^{-1}1, 1 \rangle = \int_0^{2\pi} hdx < 0$. By Remark $\ref{pel}$, this implies that
\begin{equation}\label{7L}
	n(\mathcal{L}_{\Pi}) = n(\mathcal{L}) - n_0 - z_0 = 2 - 1 - 0 = 1,
\end{equation}
and
\begin{equation}\label{8L}
	z(\mathcal{L}_{\Pi}) = z(\mathcal{L}) + z_0 = 1 - 0 = 1.
\end{equation}
The proposition is thus proved.

\end{proof}

\begin{remark}\label{kerLpi}
	Using the implicit function theorem together with Proposition $\ref{propR0}$, we ensure the existence of a smooth curve $c \mapsto \phi$ of periodic waves for all $c > \frac{\omega}{2}$ with the zero mean property. For the details of this argument, we refer the reader to \cite[Lemma 3.8]{NLP}.
\end{remark}

\begin{remark}  It is important to highlight that, by Remark $\ref{kerLpi}$, we can differentiate equation $(\ref{CHode})$ with respect to $c$ to obtain
\begin{equation}\label{1L}
	\mathcal{L}\left(\frac{d\phi}{dc}\right)=\phi''-\phi-\frac{dA}{dc}.
\end{equation}
\indent On the other hand, from equation $(\ref{CHode})$, we have
\begin{equation}\label{2L1}
	\mathcal{L}\phi=c(\phi''-\phi)+\omega\phi-2A.
\end{equation}
We also have
\begin{equation}\label{3L1}
	\mathcal{L}1=(c-\omega)-3\phi+\phi''.
\end{equation}
Combining the results obtained from  $(\ref{1L})$, $(\ref{2L1})$, and $(\ref{3L1})$, we obtain
\begin{equation}\label{4L}
	\mathcal{L}\left(\omega+2\phi+(-\omega-2c)\frac{d\phi}{dc}\right)=\omega(c-\omega)+(\omega+2c)\frac{dA}{dc}-4A=d_c.
\end{equation}
\indent We can employ some of the facts used in Proposition $\ref{propR0}$ to deduce important results. Indeed, the element $\omega+2\phi+(-\omega-2c)\frac{d\phi}{dc}$ is an even periodic function. In addition, it belongs to the kernel of $\mathcal{L}$ if and only if $d_c=0$. Moreover, if $d_{c}=0$, we have $z(\mathcal{L})=2$. Since $n(\mathcal{L}_{\Pi})=1$ and $z_0=n_0=0$, it follows that $n(\mathcal{L})=1$. If $d_c<0$, we obtain $z(\mathcal{L})=1$ and $\langle\mathcal{L}^{-1}1,1\rangle=\frac{2\pi\omega}{d_c}<0$. Since $n(\mathcal{L}_{\Pi})=1$, $n_0=1$, and $z_0=0$, it follows that $n(\mathcal{L})=2$. Finally, if $d_c>0$, we again obtain $z(\mathcal{L})=1$, and since $n_0=z_0=0$ and $n(\mathcal{L}_{\Pi})=1$, we conclude that $n(\mathcal{L})=1$.
\end{remark}

\begin{remark} We can calculate the exact sign of $d_c$ for small-amplitude periodic waves constructed in Proposition $\ref{propstokes1}$. Indeed, using the explicit expressions in $(\ref{varphi-stokes31})-(\ref{varphi-stokes3})$, we have
\begin{equation}\label{5L}
	d_c=-\frac{\omega^2}{6}+\mathcal{O}(a^2),
\end{equation}
where $a>0$ is sufficiently small. It follows from $(\ref{5L})$ that $d_c<0$ for all $c$ close to $\frac{\omega}{2}$ to the right. Moreover, we also obtain that $d_c\in \ker(\mathcal{L})^{\bot}$. Hence, for the small-amplitude periodic waves, it follows that
\begin{equation}\label{6L}
	\langle\mathcal{L}^{-1}1,1\rangle=\left\langle\frac{1}{d_c}\left(\omega+2\phi+(-\omega-2c)\frac{d\phi}{dc}\right),1\right\rangle=\frac{2\pi\omega}{d_c}<0.
\end{equation}

 Using Remark $\ref{pel}$, we observe that
\begin{equation}\label{7L1}
	n(\mathcal{L}_{\Pi}) = n(\mathcal{L}) - n_0 - z_0.
\end{equation}
Therefore, from $(\ref{6L})$ and $(\ref{7L1})$, since $n_0 = 1$, $z_0 = 0$, and $n(\mathcal{L}_{\Pi}) = 1$ (see Proposition $\ref{propR0}$), we conclude that $n(\mathcal{L}) = 2$ for the small-amplitude periodic waves.
\end{remark}

\section{Spectral stability of periodic waves - Proof of Theorem $\ref{theorem-stability}-(iii)$}
\label{sec-4}
\indent 

\begin{proposition}\label{dEphi}
	Let $c > \frac{\omega}{2}$ be fixed. If $\frac{d}{dc}E(\phi) > 0$, then the periodic wave $\phi$ is spectrally stable in the sense of Definition $\ref{defstab-spectralzm}$.
	\end{proposition}
\begin{proof}  
	Suppose that $\frac{d}{dc}E(\phi) > 0$. By Proposition $\ref{propR0}$, we have $n(\mathcal{L}_{\Pi}) = z(\mathcal{L}_{\Pi}) = 1$. Hence, since $\mathcal{L}_{\Pi}=\mathcal{L}+\frac{1}{2\pi}\langle\phi', \partial_x\cdot\rangle+\frac{3}{2\pi}\langle\phi, \cdot\rangle,$ we obtain by applying \cite[Proposition 3.8]{ANP} with $Q(u) = E(u)$ and $Q'(u) = u - u''$, that there exists a constant $C > 0$ such that
	\begin{equation}\label{coerc1}
		\langle \mathcal{L}v, v \rangle =\left\langle\mathcal{L}v+\frac{1}{2\pi}\langle\phi', \partial_xv\rangle+\frac{3}{2\pi}\langle\phi,v\rangle,v\right\rangle= \langle \mathcal{L}_{\Pi}v, v \rangle \geq C ||v||_{L_{{\rm per}}^2}^2,
	\end{equation}
	for all $v \in H_{\rm per, m}^2$ such that $\langle v, \phi - \phi'' \rangle = 0$ and $\langle v, \phi'\rangle=0$. Therefore, one has, since $z(\mathcal{L}_{\Pi})=1$, that
	\begin{equation}\label{coerc2}
		\langle \mathcal{L}v, v \rangle = \langle \mathcal{L}_{\Pi}v, v \rangle \geq0,
	\end{equation}
	for all $v \in H_{\rm per, m}^2$ such that $\langle v, \phi - \phi'' \rangle = 0$. The inequality in $(\ref{coerc2})$ establishes that $\mathcal{L}_{|_{\{{1, \phi - \phi''}\}^{\bot}}} \geq 0$, as desired, and the first part of Theorem \ref{theorem-stability}-(iii) is therefore proved for all possible cases of $d_c$, that is, $d_c \neq 0$ and $d_c = 0$.
	
\end{proof}

\indent A sufficient condition for ensuring that $\frac{d}{dc}E(\phi) > 0$ is established in the next result.
\begin{proposition}\label{propstab}
	Let $c > \frac{\omega}{2}$ be fixed. If $d_c \neq 0$ and $\frac{dA}{dc} > 0$, then the wave $\phi$ is spectrally stable in the sense of Definition $\ref{defstab-spectralzm}$. In particular, we have that $\frac{d}{dc}E(\phi) > 0$.
\end{proposition}
\begin{proof} 	Since $d_c\neq0$, we have that $\ker(\mathcal{L}) = [\phi']$. The symmetric matrix $\mathcal{A}(0)$ given by \eqref{matrixA} is given by
		\begin{equation}\label{matrixD}\mathcal{A}(0)=\left[\begin{array}{llll}
			\langle\mathcal{L}^{-1}(\phi-\phi''),\phi-\phi''\rangle &  \langle\mathcal{L}^{-1}(\phi-\phi''),1\rangle\\\\
			\langle\mathcal{L}^{-1}(\phi-\phi''),1\rangle &  \langle\mathcal{L}^{-1}1,1\rangle
			\end{array}\right].
	\end{equation}
To clarify for the reader, in Proposition $\ref{prop-Pel}$ we have, in this context, $z_0=\dim(\ker(\mathcal{A}(0)))$ and $n_0=n(\mathcal{A}(0))$. We prove our result for the case $d_c\neq0$. Again, since $\mathcal{L}:[\phi']^{\bot}\rightarrow [\phi']^{\bot}$, there exists $\Phi\in H_{\rm per,e}^2$, such that $\mathcal{L}\Phi=-\phi''$. Since $\mathcal{L}h=1$, we see by $(\ref{6L})$ that $\int_0^{2\pi}hdx=\langle\mathcal{L}^{-1}1,1\rangle=\frac{2\pi\omega}{d_c}\neq0$. Thus, by $(\ref{2L})$ and $(\ref{3L})$, we obtain that $\Phi$ is given by
\begin{equation}\label{Phi}
\Phi(x)=\frac{c-\omega}{2c+\omega}-\frac{3\phi}{2c+\omega}-\frac{(c-\omega)^2+6A}{2c+\omega}h.
\end{equation}
Thus, we obtain by $(\ref{chi})$ and $(\ref{Phi})$
\begin{equation}\label{Lphiphi2}
\chi+\Phi=\mathcal{L}^{-1}(\phi-\phi'')=-\frac{\omega}{2c+\omega}-\frac{2\phi}{2c+\omega}+\frac{c\omega-\omega^2-4A}{2c+\omega}\mathcal{L}^{-1}1,
\end{equation}
so that 
\begin{equation}\label{Lphi1}
\langle\mathcal{L}^{-1}(\phi-\phi''),1\rangle=-\frac{\omega}{2c+\omega}+\frac{c\omega-\omega^2-4A}{2c+\omega}\langle\mathcal{L}^{-1}1,1\rangle.
\end{equation}
Gathering the results in $(\ref{Lphiphi2})$ and $(\ref{Lphi1})$, we obtain from $(\ref{matrixD})$
\begin{equation}\label{detD}\begin{array}{lllll}
\det \mathcal{A}(0)&=&\displaystyle-\frac{2}{2c+\omega}\int_0^{2\pi}(\phi^2+\phi'^2)dx\int_0^{2\pi}hdx\\\\
&+&\displaystyle\frac{2\pi\omega}{2c+\omega}\left[-\frac{2\pi\omega}{2c+\omega}+\frac{c\omega-\omega^2-4A}{2c+\omega}\int_0^{2\pi}hdx\right].\end{array}
\end{equation}
\indent We need to determine the sign of the second term on the right-hand side of~\eqref{detD}, namely the sign of
\begin{equation}\label{chiphi}\int_0^{2\pi}(\chi+\Phi)dx=-\frac{2\pi\omega}{2c+\omega}+\frac{c\omega-\omega^2-4A}{2c+\omega}\int_0^{2\pi}hdx.
\end{equation}
By Remark $\ref{kerLpi}$, the mapping $c\in \left(\frac{\omega}{2},+\infty\right)\mapsto \phi\in H_{\rm per,m,e}^{\infty},$ is smooth. Consequently, by $(\ref{1L})$ and $(\ref{Lphiphi2})$, it follows that
\begin{equation}\label{chiphiderA1}
\mathcal{L}\left(\chi+\Phi+\frac{d\phi}{dc}\right)=-\frac{dA}{dc}.
\end{equation}
From $(\ref{chiphiderA1})$ and using the fact that $\frac{dA}{dc}>0$, we then obtain
\begin{equation}\label{chiphiderA}
\chi+\Phi+\frac{d\phi}{dc}=-\frac{dA}{dc}\mathcal{L}^{-1}1.
\end{equation}
Integrating the result in $(\ref{chiphiderA})$ over the interval $[0,2\pi]$, and using the facts that $\frac{dA}{dc}>0$ and $\int_0^{2\pi}hdx\neq0$, we obtain that
\begin{equation}\label{intchiphiderA}
\int_0^{2\pi}(\chi+\Phi)dx=-\frac{dA}{dc}\langle\mathcal{L}^{-1}1,1\rangle=-\frac{dA}{dc}\int_0^{2\pi}hdx\neq0.
\end{equation}
\indent On the other hand, since we have $\frac{dA}{dc}>0$ for all $c>\frac{\omega}{2}$, it follows from $(\ref{detD})$ and by assuming $d_c<0$, that $\det \mathcal{A}(0)>0$. Again, since $\langle\mathcal{L}^{-1}1,1\rangle=\int_0^{2\pi}hdx<0$, we have $z_0=0$ and $n_0=2$. By Proposition $\ref{prop-Pel}$, we obtain
$$n\left(\mathcal{L}_{|_{\{1,\phi-\phi''\}^{\bot}}}\right)=n(\mathcal{L})-z_0-n_0=2-0-2=0.$$
Now, if $d_c>0$, we conclude $\int_0^{2\pi}hdx>0$. Since $\frac{dA}{dc}>0$, it follows from $(\ref{intchiphiderA})$ that $\int_0^{2\pi}(\chi+\Phi)dx<0$. Therefore, we obtain by $(\ref{detD})$ that $\det \mathcal{A}(0)<0$. By Proposition $\ref{prop-Pel}$, we obtain
$$n\left(\mathcal{L}_{|_{\{1,\phi-\phi''\}^{\bot}}}\right)=n(\mathcal{L})-z_0-n_0=1-0-1=0.$$
The spectral stability is now complete for the case $d_c\neq0$ and $\frac{dA}{dc}>0$.\\
 \indent To complete the proof of the proposition, we need to highlight some additional facts. In fact, by Remark $\ref{kerLpi}$, we have that the mapping $c\in\left(\frac{\omega}{2},+\infty\right)\mapsto \phi \in H_{\rm per,m,e}^{\infty}$ is smooth. Therefore, since $d_c \neq 0$, we obtain that $z(\mathcal{L}) = 1$ and that $\mathcal{L} : [\phi']^{\bot} \to [\phi']^{\bot}$ is invertible. Thus, by $(\ref{1L})$, we obtain that
 \begin{equation}\label{1L1}
 	\langle\mathcal{L}^{-1}(\phi-\phi''),\phi-\phi''\rangle=-\frac{1}{2}\frac{d}{dc}\int_0^{2\pi}(\phi^2+\phi'^2)dx-\frac{dA}{dc}\langle \mathcal{L}^{-1}(\phi-\phi''),1\rangle.
 \end{equation}
 Similarly, again by $(\ref{1L})$, we deduce from the fact that $\frac{d\phi}{dc}$ has the zero-mean property that
 \begin{equation}\label{2L2}
 	\langle\mathcal{L}^{-1}(\phi-\phi''),1\rangle=-\frac{dA}{dc}\langle \mathcal{L}^{-1}1,1\rangle.
 \end{equation}
  Thus, by $(\ref{1L1})$ and $(\ref{2L2})$, we obtain
\begin{equation}\label{deta0}
\det \mathcal{A}(0) = -\frac{1}{2}\left( \frac{d}{dc} \int_{0}^{2\pi} (\phi^2 + \phi'^2)dx \right)\langle \mathcal{L}^{-1}1, 1 \rangle=-\frac{d}{dc}E(\phi)\langle \mathcal{L}^{-1}1, 1 \rangle.
\end{equation}
As proved  below $(\ref{intchiphiderA})$, if $d_c < 0$ then
$\langle \mathcal{L}^{-1}1, 1 \rangle < 0$ and $\frac{dA}{dc} > 0$. Therefore, $(\ref{deta0})$ implies the well-known Vakhitov-Kolokolov stability criterion:
\begin{equation}\label{LPiphiphi2}
\langle \mathcal{L}_{\Pi}^{-1}(\phi - \phi''), \phi - \phi'' \rangle
= -\frac{d}{dc}E(\phi) < 0.
\end{equation}
Similarly, if $d_c > 0$, we obtain $\langle \mathcal{L}^{-1}1, 1 \rangle=\frac{2\pi\omega}{d_c}=\int_0^{2\pi}hdx > 0$ and $\det \mathcal{A}(0) < 0$. Thus, the same condition in $(\ref{LPiphiphi2})$ is also satisfied, and we conclude that if $d_c \neq 0$ and $\frac{dA}{dc} > 0$, then it follows that $\frac{d}{dc}E(\phi) > 0$.
\end{proof}

\subsection{A remark on the orbital stability of periodic waves}
\label{sec-5}

To finish, we prove the orbital stability of periodic waves. To this end, we follow the approach of \cite{ANP} (see also \cite{GrillakisShatahStraussI}).

\begin{proposition}\label{prop-Natali} Let $c>\frac{\omega}{2}$ be fixed. If $\frac{d}{dc}E(\phi)>0$, the periodic wave $\phi \in H^{\infty}_{\rm per,m,e}$ is orbitally stable in $H_{\rm per,m}^1$ in the sense of Definition $\ref{defstab}$.	
\end{proposition}
\begin{proof}
 By Propositions $\ref{propR0}$, we obtain that the linearized operator $\mathcal{L}_{\Pi}$ has a simple negative eigenvalue and a simple zero eigenvalue associated with the eigenfunction $\phi'$.\\
 \indent Following again the notation in \cite[Proposition 3.8]{ANP}, we consider $Q(u) = E(u)$, where $E(u)$ is given by $(\ref{Eu})$. Since $Q'(u) = u - u''$ and $\frac{d}{dc}E(\phi)>0$, we deduce from $(\ref{LPiphiphi2})$ that
$\langle \mathcal{L}_{\Pi}^{-1}(\phi - \phi''), \phi - \phi'' \rangle < 0$ for all $c > \tfrac{\omega}{2}$. Thus, there exists $C > 0$ such that
 \begin{equation}\label{coerc}
 \langle \mathcal{L}_{\Pi}v, v \rangle \geq C ||v||_{L_{\text{per}}^2}^2,
 \end{equation}
for all $v \in H_{\rm per, m}^2$ such that $\langle v, Q'(u) \rangle =  0$ and $\langle v, \phi' \rangle = 0$. Thus, by \cite[Theorem 3.6]{ANP}, we conclude that $\phi$ is orbitally stable in $H_{\rm per,m}^1$ the sense of Definition $\ref{defstab}$. This result completes the proof of Theorem $\ref{theorem-stability}$-(iii).
\end{proof}

\begin{corollary} Let $c>\frac{\omega}{2}$ be fixed. If $d_c>0$ and $\frac{dA}{dc}>0$, the periodic wave $\phi \in H^{\infty}_{\rm per,m,e}$ is orbitally stable in $H_{\rm per}^1$ in the sense of Definition $\ref{defstab}$.
\end{corollary}
\begin{proof} Since $d_c > 0$, we obtain that $n(\mathcal{L}) = z(\mathcal{L}) = 1$. In addition, since $\frac{dA}{dc} > 0$, we conclude by Proposition $\ref{propstab}$ that $\frac{d}{dc}E(\phi)>0$. Thus,
	\begin{equation}\label{derL}
		\left\langle\mathcal{L}\left(-\frac{d\phi}{dc}\right),-\frac{d\phi}{dc}\right\rangle=\left\langle \phi-\phi''+\frac{dA}{dc},-\frac{d\phi}{dc}\right\rangle=-\frac{d}{dc}E(\phi) < 0.
		\end{equation} 
	Therefore, again by \cite[Theorem 3.6]{ANP}, now applied to the linearized operator $\mathcal{L}$, we conclude that $\phi$ is orbitally stable in $H_{\rm per}^1$ in the sense of Definition $\ref{defstab}$. 
\end{proof}

\begin{remark}
	\label{remark-orbital}
	An important fact needs to be clarified. The notion of orbital stability in Definition~\ref{defstab} requires the existence of global solutions $u \in C(\mathbb{R}, H^1_{\rm per,m})$. For $s > \frac{3}{2}$, local solutions $u \in C((-t_0, t_0), H^s_{\rm per,m})$ for some $t_0 > 0$ exist due to the local well-posedness theory in \cite{CE-1998}, \cite{D}, \cite{hakka}, and \cite{HM2002}. By combining the local solution in $H_{\rm per,m}^s$ for $s > \frac{3}{2}$ with the conservation laws $M(u(t))=M(u_0)$ and $E(u(t)) = E(u_0)$ for all $t \geq 0$, we can extend it to a global solution in $H_{\rm per,m}^1$. Then, this global solution remains close to the smooth periodic waves in accordance with Definition \ref{defstab}. The same arguments can be applied for full the energy space $H_{\rm per}^1$ instead of $H_{\rm per,m}^1$.
\end{remark}

\begin{remark} Equation $(\ref{CHode})$ can be solved numerically using Python and by using the fact that $c \in \left(\frac{\omega}{2}, +\infty\right) \mapsto \phi$ is smooth with respect to $c$, as stated in Remark $\ref{kerLpi}$. This allows us to analyze the behaviour of the function $E(\phi)$ in terms of $c\in\left(\frac{\omega}{2},+\infty\right)$. We present plots of $E(\phi)$ for the cases $\omega = 1$, $\omega = 2$, $\omega = 3$, and $\omega = 5$. For other values of $\omega$, the behavior of $E(\phi)$ for all $c > \frac{\omega}{2}$ is similar. From our analysis, we obtain that $\frac{d}{dc}E(\phi) > 0$ for all $c > \frac{\omega}{2}$. By Theorem \ref{theorem-stability}-(iii), and supported by numerical computations, we conclude that $\phi$ is spectrally and orbitally stable. It is important to mention that even though the function $E(\phi)$ is strictly increasing in terms of $c$, the function $d_c$ in $(\ref{4L})$ may still vanish at points in the interval $c\in\left(\frac{\omega}{2},+\infty\right)$\footnote{This behaviour is similar to that reported in \cite[Remark 11]{GMNP}, where the authors studied the monotonicity of the period map with respect to the energy levels and proved that the period map may have at most one non-degenerate critical point.}. However, the possible existence of zeroes of $d_c$ is not important for the spectral and orbital stability. 
	\end{remark}
	
	\begin{figure}[h]
		\begin{minipage}[t]{0.4\linewidth}
			\includegraphics[width=3.2in]{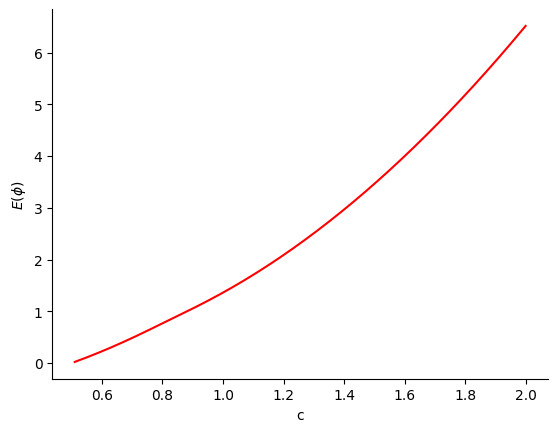}
		\end{minipage}
		\hspace{30pt}
		\begin{minipage}[t]{0.40\linewidth}
			\includegraphics[width=3.2in]{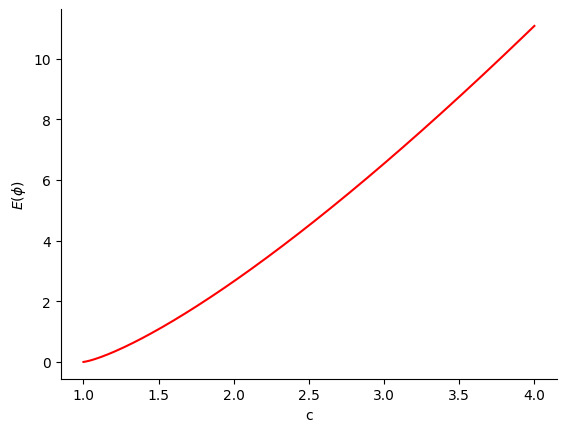}
		\end{minipage}
		\caption{Graph of $E(\phi)$ for $\omega=1$ (left), and graph of $A$ for $\omega=2$ (right).}
		\label{exact_p2E}
	\end{figure}

	\begin{figure}[h]
		\begin{minipage}[t]{0.4\linewidth}
			\includegraphics[width=3.2in]{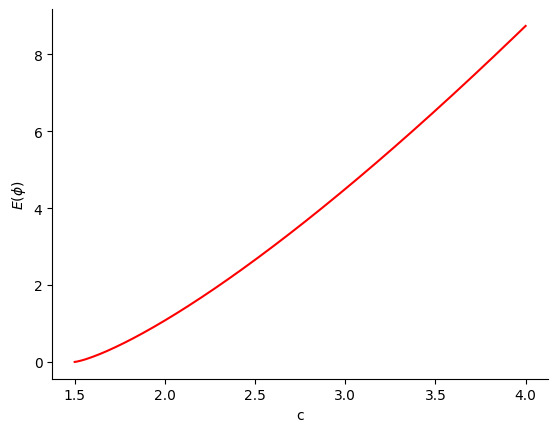}
		\end{minipage}
		\hspace{30pt}
		\begin{minipage}[t]{0.40\linewidth}
			\includegraphics[width=3.2in]{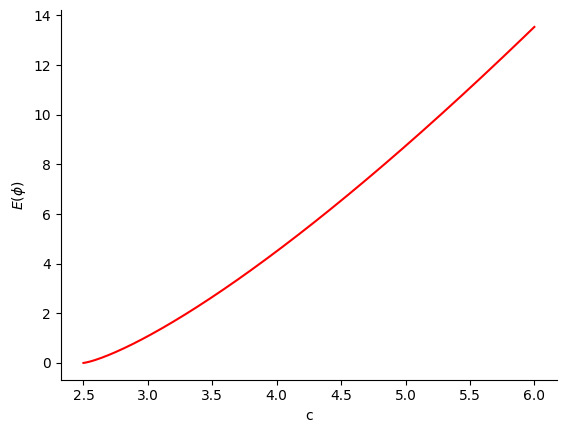}
		\end{minipage}
		\caption{Graph of $E(\phi)$ for $\omega=3$ (left), and graph of $A$ for $\omega=5$ (right).}
		\label{exact_p4E}
	\end{figure}

\newpage
\section*{Acknowledgments}
The author would like to thank J.C. Bronski and M.A. Johnson for fruitful discussions that improved the presentation of this work. F. Natali is partially supported by CNPq/Brazil (grant 303907/2021-5).

\end{document}